\numberwithin{equation}{section}
\numberwithin{equation}{section}
\newtheorem{theorem}{Theorem}[section]
\newtheorem{lemma}[theorem]{Lemma}
\newtheorem{prop}[theorem]{Proposition}
\newtheorem{cor}[theorem]{Corollary}
\newtheorem{rem}[theorem]{Remark}
\newtheorem{definition}[theorem]{Definition}
\newcommand{\un}{\mathbf{1}}
\newcommand{\eps}{\epsilon}
\newcommand{\gep}{\varepsilon}       
\newcommand{\cD}{{\ensuremath{\mathcal D}} }
\newcommand{\cF}{{\ensuremath{\mathcal F}} }
\newcommand{\cE}{{\ensuremath{\mathcal E}} }
\newcommand{\cM}{{\ensuremath{\mathcal M}} }
\newcommand{\cN}{{\ensuremath{\mathcal N}} }
\newcommand{\bY}{{\ensuremath{\mathbf Y}} }
\newcommand{\bbE}{{\ensuremath{\mathbb E}} }
\newcommand{\E}{{\ensuremath{\mathbb E}} }
\newcommand{\N}{{\ensuremath{\mathbb N}} }
\renewcommand{\P}{{\ensuremath{\mathbb P}} }
\newcommand{\bbP}{{\ensuremath{\mathbb P}} }
\newcommand{\Q}{{\ensuremath{\mathbb Q}} }
\newcommand{\bbQ}{{\ensuremath{\mathbb Q}} }
\newcommand{\R}{{\ensuremath{\mathbb R}}}
\newcommand{\ZV}{Z}
\newfont{\indic}{bbmss12}
\def\un#1{\hbox{{\indic 1}$_{#1}$}}
\title[Fleming-Viot Processes with Mutations]{On Exceptional Times for generalized Fleming-Viot Processes with Mutations}
\author{J. Berestycki}
\thanks{}
\address{Laboratoire de Probabilit\'es et Mod\'eles Al\'eatoires Universit\'e Paris 6, 4 place Jussieu, 75252 Paris Cedex 05, France}
\email{$\{$julien.berestycki, leif.doering, lorenzo.zambotti$\}$@upmc.fr}
\author{L. D\"oring}
\author{L. Mytnik}
\address{Faculty of Industrial Engineering and Management Technion Israel Institute of Technology, Haifa 32000, Israel}
\email{leonid@ie.technion.ac.il}
\author{L. Zambotti}
\subjclass[2000]{Primary 60J80; Secondary 60G18}
\keywords{Fleming-Viot Processes, Mutations, Exceptional Times, Excursion Theory, Jump-Type SDE, Self-Similarity}
\begin{document}

\begin{abstract}

If $\mathbf Y$ is a standard Fleming-Viot process with constant mutation rate (in the infinitely many sites model) then it is well known that for each $t>0$ the measure  $\mathbf Y_t$ is purely atomic with infinitely many atoms. However, Schmuland proved that there is a critical value for the mutation rate under which almost surely there are exceptional times at which $\mathbf Y$ is a finite sum of weighted Dirac masses.
In the present work we discuss the existence of such exceptional times for the generalized Fleming-Viot processes. In the case of Beta-Fleming-Viot processes 
with index $\alpha\in\,]1,2[$ we show that - irrespectively of the mutation rate and $\alpha$ - the number of atoms is almost surely always infinite. 
The proof  combines a Pitman-Yor type representation with a disintegration formula, Lamperti's transformation for self-similar processes and covering results for Poisson point processes. 

\end{abstract}
\maketitle
\section{Main Result}
The measure-valued {\it Fleming-Viot diffusion processes} were first introduced by Fleming and Viot \cite{FlemingViot} and have become a cornerstone of mathematical population genetics in the last decades. It is a model which describes the evolution (forward in time) of the genetic composition of a large population. Each individual is characterized by a {\it genetic type} which is a point in a type-space $E$. The Fleming-Viot process is a Markov process $(\textbf Y_t)_{t\geq 0}$ on
	\begin{align*}
		\mathcal M^1_{E}=\big\{\nu: \nu\text{ is a probability measure on } E\big\}
	\end{align*}
for which we interpret $\mathbf Y_t(B)$ as the proportion of the population at time $t$ which carries a genetic type belonging to a Borel set $B$ of types. In particular, the number of (different) types at time $t$ is equal to the number of atoms of $\textbf Y_t$ with the convention that the number of types is infinite if $\textbf Y_t$ has absolutely continuous part.

Fleming-Viot superprocesses can be defined through their infinitesimal generators
\begin{equation}\label{GeneratorFV}
(\mathcal{L} \phi)(\mu) = \int_E\int_E \mu(dv) (\delta_v(dy)-\mu(dy)) {\delta^2 \phi(\mu) \over \delta \mu(v) \delta \mu(y)} +\int_E \mu(dv) A\left( {\delta \phi (\mu) \over \delta \mu(\cdot) }\right)(v),
\end{equation}
acting on smooth test-functions where $\delta \phi(\mu) /\delta \mu(v) = \lim_{\epsilon \to 0+} \epsilon^{-1}\{ \phi(\mu+\epsilon \delta_v) -\phi(\mu))$ and $A$ is the generator for a Markov process in $E$ which represents the effect of mutations. Here $\delta_v$ is the Dirac measure at $v$. It is well known that the Fleming-Viot superprocess arises as the scaling limit of a Moran-type model for the evolution of a finite discrete population of fixed size if the reproduction mechanism is such that no individual gives birth to a positive proportion of the population in a small number of generations.
 For a detailed description of Fleming-Viot processes and discussions of variations we refer to the overview article of Ethier and Kurtz \cite{EthierKurtz93} and to Etheridge's lecture notes \cite{etheridge lect}. 

The first summand of the generator reflects the genetic resampling mechanism whereas the second summand represents the effect of mutations. Several choices for $A$ have appeared in the literature. In the present work we shall work in the setting of the {\it infinite site model} where each mutation creates a new type never seen before. Without loss of generality
let the type space be $E=[0,1]$.
Then the following choice of $A$  gives an example of an infinite site model with mutations: 
	\begin{equation}\label{E:IAM mutation operator}
		(Af)(v) = \theta \int_E (f(y)-f(v))dy,
	\end{equation}
for some $\theta >0$. 
The choice of the uniform measure $dy$ is arbitrary (we could choose the new type according to any distribution that has a density with respect to the Lebesgue measure), all that matters is that the newly created type $y$ is different from all other types. With
 $A$ as in~(\ref{E:IAM mutation operator}), mutations arrive at rate $\theta$ and create a new type picked at random from $E$ according to the uniform measure, therefore the corresponding process is sometimes 
called the Fleming-Viot process with neutral mutations.  
 
\medskip

Let us briefly recall two classical facts concerning the infinite types Fleming-Viot process described above  (for a more complete picture we refer to the monograph of Etheridge \cite{Etheridge}) for the uniform initial condition $\textbf Y_0$:
\begin{itemize}
\item[(i)] If there is no mutation, then, for all $t>0$ fixed, the number of types is almost surely finite.
\item[(ii)] If the mutation parameter $\theta$ is strictly positive, then, for all $t>0$ fixed,  the number of types is infinite almost surely.
\end{itemize}
A beautiful complement to (i) and (ii) was found by Schmuland for exceptional times that are not fixed in advance:
\begin{theorem}[Schmuland \cite{Schmuland}]\label{schmu} 
		\begin{align*}
		\P\big( \exists \, t>0 : \# \{\text{types at time }t \}<\infty\big)=\left\{ \begin{array}{c}1 \quad\text{ if } \theta <1, \\ 0\quad \text{ if } \theta \ge 1. \end{array}\right. 
	\end{align*}  
\end{theorem}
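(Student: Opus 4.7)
The plan is to recast the existence of exceptional times as a capacity question via Fukushima's theory of symmetric Markov processes. The neutral Fleming--Viot process $\mathbf Y$ with mutation parameter $\theta$ is reversible with respect to its stationary law $\pi_\theta$ (the Dirichlet process with parameter $\theta\cdot\mathrm{Leb}$ on $[0,1]$), and the associated regular symmetric Dirichlet form $(\mathcal E_\theta,\mathcal D(\mathcal E_\theta))$ on $L^2(\mathcal M^1_{[0,1]},\pi_\theta)$ goes back to Ethier and Kurtz. Writing $F_n:=\{\nu\in\mathcal M^1_{[0,1]}:\#\mathrm{atoms}(\nu)\le n\}$ and $F:=\bigcup_{n\ge 1}F_n$, Fukushima's hitting/capacity criterion gives
$$\bbP_{\pi_\theta}(\exists\, t>0:\mathbf Y_t\in F)>0\iff\mathrm{Cap}_\theta(F)>0,$$
and a Kolmogorov-type argument based on the Markov property and the ergodicity of $\mathbf Y$ under $\pi_\theta$ upgrades positivity to a $\{0,1\}$ dichotomy. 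The uniform initial condition of the statement is then recovered by standard absolute-continuity arguments for $\mathbf Y_\epsilon$ with $\epsilon>0$.

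By subadditivity it suffices to decide whether $\mathrm{Cap}_\theta(F_n)>0$ for some (equivalently every sufficiently large) $n$. For this I would pass to the Pitman--Yor / GEM size-biased coordinates: $\pi_\theta$-a.s., $\nu=\sum_{i\ge 1}P_i\delta_{U_i}$ with $P_i=W_i\prod_{j<i}(1-W_j)$, the $W_j$ i.i.d.\ $\mathrm{Beta}(1,\theta)$ and the $U_i$ i.i.d.\ uniform. In these coordinates the form decouples, and on cylindrical functions of a finite prefix $(W_1,\dots,W_k)$ it becomes a tensor product of one-dimensional Dirichlet forms weighted by the Beta$(1,\theta)$ density $\theta(1-w)^{\theta-1}$. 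A natural capacitor candidate is $u_{n,\epsilon}(\nu):=\phi_\epsilon(P_{n+1}(\nu))$ for a smooth cutoff $\phi_\epsilon$ equal to $1$ near $0$ and supported in $[0,\epsilon]$; it is identically $1$ on $F_n$, and its energy reduces to a weighted one-dimensional integral of $(\phi_\epsilon')^2$ near the boundary of the simplex.

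A direct Beta-integral computation should then pinpoint $\theta=1$ as the phase transition: uniformly in $\epsilon\downarrow 0$ the energy of $u_{n,\epsilon}$ is finite exactly when $\theta<1$, yielding $\mathrm{Cap}_\theta(F_n)>0$ and hence the exceptional times in that regime. For $\theta\ge 1$ one needs a matching lower bound for \emph{every} $u\in\mathcal D(\mathcal E_\theta)$ with $u\ge 1$ on $F_n$; I would derive it from a Hardy-type inequality against the same weighted density, forcing $\mathcal E_\theta(u,u)=+\infty$ and hence $\mathrm{Cap}_\theta(F_n)=0$. The main obstacle is precisely this universal lower bound: one must rule out that the infinite-dimensional structure of $\mathcal M^1_{[0,1]}$ allows a cleverer test function to defeat the one-dimensional Hardy obstruction, which requires disintegrating $\pi_\theta$ along the ranked atoms and reducing the capacity of $F_n$ to a sharp one-dimensional problem in which the critical exponent $\theta=1$ emerges naturally.
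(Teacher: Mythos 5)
Your route is genuinely different from the paper's. What you sketch is, in substance, Schmuland's original proof: pass to the reversible Dirichlet form on $L^2(\cM^1_{[0,1]},\pi_\theta)$, invoke Fukushima's polarity criterion so that exceptional times correspond to positive capacity of the finite-atom sets $F_n$, and then reduce the capacity of $F_n$ to a one-dimensional Beta-weighted energy integral in GEM (size-biased) coordinates, where the boundary $w=1$ of the Beta$(1,\theta)$ weight is accessible iff $\theta<1$. The paper deliberately avoids this analytic machinery and instead reproves the dichotomy by the same toolkit it uses for the main Theorem~\ref{T:main}: disintegrate the Fleming--Viot process into a Feller branching diffusion with constant immigration $\theta$ via the time change $S$, express the latter through the Pitman--Yor excursion representation \eqref{RN} with the excursion measure of $\psi(u)=u^2$, observe $\bbQ(\ell(w)\in dh)=h^{-2}\,dh$, and convert the question of infinitely many surviving excursion summands into Shepp's criterion for covering the half-line by Poisson shadows of intensity $dt\otimes\theta h^{-2}\,dh$. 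The threshold $\theta=1$ then falls out of the single integral $\int_0^1 t^{-\theta}\,dt$. The payoff of the paper's approach is that both directions of the dichotomy come from the same ``if and only if'' covering theorem and that the argument is the $\alpha=2$ specialization of the proof of Theorem~\ref{T:main}; the payoff of your approach is that it stays entirely in the language of symmetric Markov processes and does not need the disintegration or the excursion calculus.

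There is, however, a genuine gap that you yourself flag: the claim $\mathrm{Cap}_\theta(F_n)=0$ for $\theta\ge 1$ is only asserted, not proved. Producing a test function $u_{n,\eps}=\phi_\eps(P_{n+1})$ with finite energy when $\theta<1$ is the easy half; for $\theta\ge1$ you must show that \emph{no} function in $\cD(\cE_\theta)$ equal to $1$ quasi-everywhere on $F_n$ has finite energy, and the reduction of this infinite-dimensional capacity to a sharp one-dimensional Hardy inequality is exactly the technical heart of Schmuland's paper, not a routine step. In particular, the decoupling of the form in GEM coordinates is not literal: the closability and the identification of $\cD(\cE_\theta)$ on cylinder functions of $(W_1,\dots,W_k)$, and the fact that one may restrict to such cylinders without losing capacity, need justification. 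A second, smaller gap is the $\{0,1\}$ upgrade: ergodicity of the stationary process gives a Kolmogorov-type zero-one law for tail events, but ``$\exists\,t>0:\bY_t\in F$'' is not a tail event, so you need the time-reversal / Blumenthal-type argument specific to symmetric Markov processes (or the quasi-everywhere refinement of Fukushima's criterion) rather than plain ergodicity. Neither gap is fatal to the strategy, but as written the proposal proves only the $\theta<1$ direction.
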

Schmuland's proof of the dichotomy is based on analytic arguments involving the capacity of finite dimensional subspaces of the infinite dimensional state-space. In Section \ref{sec:schmuland} we reprove Schmuland's theorem via excursion theory. 

In the series of articles \cite{BertoinLeGall1}, \cite{BertoinLeGall2}, \cite{BertoinLeGall3}, Bertoin and Le Gall  introduced and started the study of {\it $\Lambda$-Fleming-Viot processes}, a class of stochastic processes which naturally extends the class of  standard Fleming-Viot processes. These processes are completely characterized by a finite measure $\Lambda$ on $[0,1]$ and a generator $A$. 
Similarly to the standard Fleming-Viot process, these processes can be defined through their infinitesimal generator
\begin{equation}\label{E:GFVmutation generator}
\begin{split}
	(\mathcal{L} \phi)(\mu) = & \int_0^1 y^{-2} \Lambda(dy) \int \mu(da) (\phi((1-y)\mu +y \delta _a)- \phi(\mu)) \\ & + \int_E \mu(dv) A\left( {\delta \phi (\mu) \over \delta \mu(\cdot) }\right)(v),
	\end{split}
\end{equation}
 and the sites of atoms are again called types.
 For $A=0$, the generator formulation only appeared implicitly in \cite{BertoinLeGall2} and is explained in more details in Birkner et al. \cite{7} and for $A$ as in \eqref{E:IAM mutation operator} it can be found in Birkner et al. \cite{Birkner}.
The dynamics of a generalized Fleming-Viot process $(\textbf Y_t)_{t\geq 0}$ are as follows: at rate $y^{-2}\Lambda(dy)$ a point $a$ is sampled at time $t>0$ according to the probability measure $\bY_{t-}(da)$ and a point-mass $y$ is added at position $a$ while scaling the rest of the measure by $(1-y)$ to keep the total mass at 1. The second term of \eqref{E:GFVmutation generator} is the same mutation operator as in \eqref{GeneratorFV}.
 For a detailed description of $\Lambda$-Fleming-Viot processes and discussions of variations we refer to the overview article of Blath and Birkner \cite{BirknerBlath}.

 \smallskip
In the following we are going to focus only on the choice $\Lambda=Beta(2-\alpha,\alpha)$, the Beta distribution with density 
\begin{align*}
	f(u)=	C_\alpha u^{1-\alpha}(1-u)^{\alpha-1}du,\quad\quad C_\alpha=\frac{1}{\Gamma(2-\alpha)\Gamma(\alpha)},
\end{align*}
for $\alpha\in\, ]1,2[$, and mutation operator $A$ as in~\eqref{E:IAM mutation operator}.   The corresponding $\Lambda$-Fleming-Viot process $(\bY_t)_{t\geq 0}$ is called Beta-Fleming-Viot process or $(\alpha,\theta)$-Fleming-Viot process 
and several results have been established in recent years.
The $(\alpha,\theta)$-Fleming-Viot processes converge weakly to the standard Fleming-Viot process as $\alpha$ tends to $2$.  It was shown in \cite{7} that a $\Lambda$-Fleming-Viot process with $A=0$ is related to measure-valued branching processes in the spirit of Perkin's disintegration theorem precisely if $\Lambda$ is a Beta distribution (this relation is recalled and extended in Section~\ref{Sec:Transfer} below).

If we chose $\alpha\in\, ]1,2[$ and $\textbf Y_0$ uniform on $[0,1]$, then we find the same properties (i) and (ii) for the one-dimensional marginals $\textbf Y_t$ unchanged with respect to the classical case \eqref{GeneratorFV},\eqref{E:IAM mutation operator}.
In fact, for a general $\Lambda$-Fleming-Viot process, (i) is equivalent to the requirement that the associated $\Lambda$-coalescent comes down from infinity (see for instance \cite{bbl3}).
 Here is our main result: contrary to  Schmuland's result,
$(\alpha,\theta)$-Fleming-Viot processes with $\alpha\in\, ]1,2[$ and 
$\theta>0$ never have exceptional times:

\begin{theorem} \label{T:main} Let $(\bY_t)_{t\geq 0}$ be an  $(\alpha,\theta)$-Fleming-Viot superprocess with mutation rate $\theta>0$ and parameter $\alpha\in\, ]1,2[.$ If $\textbf Y_0$ is uniform, then
	\begin{align*}
		\P\big( \exists \, t>0 : \# \{\text{types at time }t \}<\infty\big)=0
	\end{align*}
	for any $\theta>0$.
\end{theorem}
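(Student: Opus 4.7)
The plan is to combine three ingredients: the Pitman--Yor/disintegration representation of $(\alpha,\theta)$-Fleming--Viot processes (developed in Section~\ref{Sec:Transfer}), Lamperti's transformation relating the $\alpha$-stable continuous-state branching process (CSBP) to the spectrally positive $\alpha$-stable L\'evy process, and a covering argument for a $\sigma$-finite Poisson point process of intervals.

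First, I would use the disintegration formula to describe the atoms of $\bY_t$. Since the Beta$(2-\alpha,\alpha)$-coalescent comes down from infinity for $\alpha\in\,]1,2[$, only finitely many atoms of $\bY_t$ descend from the uniform initial condition; all remaining atoms are mutation-clumps born after time $0$. These are indexed by a $\sigma$-finite marked Poisson point process on $(0,\infty)\times\Omega_{\mathrm{exc}}$ with intensity $\theta\,ds\otimes n_\alpha(d\omega)$, where $n_\alpha$ is the excursion measure of the $\alpha$-stable CSBP at $0$. An atom is alive in $\bY_t$ iff its excursion $\omega_i$ born at time $s_i$ satisfies $s_i+\zeta(\omega_i)>t$, where $\zeta(\omega)$ denotes the excursion length.

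Second, I would compute the tail of $\zeta$ using Lamperti's transformation and the $(\alpha-1)$-self-similarity of the $\alpha$-stable CSBP. The excursion measure satisfies
\[
 n_\alpha(\zeta>r)=c_\alpha\,r^{-1/(\alpha-1)},\qquad r>0,
\]
with exponent $1/(\alpha-1)>1$ because $\alpha\in\,]1,2[$. Consequently the expected number of mutation-atoms born in the left-window $(t-\epsilon,t)$ and still alive at $t$ equals
\[
 \theta\int_0^\epsilon n_\alpha(\zeta>u)\,du \,=\, c_\alpha\theta\int_0^\epsilon u^{-1/(\alpha-1)}\,du \,=\,+\infty
\]
for every $\epsilon>0$, so for each fixed $t>0$ the number of alive mutation-atoms is almost surely infinite.

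Third, I would upgrade this fixed-$t$ statement to ``almost surely, for every $t>0$''. An exceptional time $t$ would require a conspiracy: for some $\epsilon>0$, all mutation-excursions born in $(t-\epsilon,t)$ would have $\zeta(\omega_i)\le t-s_i$, i.e., the long-lived excursions would all avoid a left-neighborhood of $t$. I would rule this out by a Borel--Cantelli argument at dyadic scales $\epsilon_n=2^{-n}$: on each scale, the count of excursions born in $(t-\epsilon_n,t-\epsilon_{n+1})$ with $\zeta>\epsilon_n/2$ is Poisson with mean diverging as $n\to\infty$ by the scaling of $n_\alpha$, so it is a.s.\ infinite; self-similarity transfers the dyadic statement to all $t$ at once. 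The main obstacle is precisely this last step: a naive union bound over rational triples $(q_1,q_2,r)$ of ``window'' and ``lifetime threshold'' fails, because for a fixed threshold $r>0$ and a fixed window the Poisson count has finite mean and is thus a.s.\ finite. The window and the threshold must be allowed to shrink jointly at the self-similar rate dictated by the heavy-tail exponent $1/(\alpha-1)>1$, and the careful implementation of this joint shrinkage is the technical heart of the argument.
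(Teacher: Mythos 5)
Your plan has the right ingredients --- the Pitman--Yor/disintegration representation, the tail $\bbQ(\zeta>h)\asymp h^{-1/(\alpha-1)}$ with exponent $>1$, and the recognition that the hard part is upgrading the fixed-$t$ statement to a simultaneous statement for all $t$. But there are two genuine gaps, and the second is the one you yourself flag as unresolved.

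First, the excursion point process you invoke for $\bY$ does not exist in the form you state. The disintegration gives a Pitman--Yor representation for the measure-valued branching process $\mathbf{X}$, and there the immigration rate is not constant but equal to $\theta X_s(1)^{2-\alpha}$; the Fleming--Viot process is then a (random) time change and normalization of $\mathbf{X}$. So the point process of newborn clumps is not a Poisson point process with intensity $\theta\,ds\otimes\bbQ(dw)$. What rescues this is a comparison: since $T_\epsilon:=\inf\{t:X_t(1)\le\epsilon\}\uparrow T_0$ and the immigration rate is monotone in the total mass, one bounds the immigration from below by the constant $\theta\epsilon^{2-\alpha}$ up to time $T_\epsilon$, obtaining the process $Z^\epsilon_t(v)=\sum_{s_i>0} w^i_t\,\un{(u_i\le v\theta\epsilon^{2-\alpha})}$, which is driven by a bona fide Poisson point process. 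You need this reduction before the rest of your calculation makes sense; it also explains why you may safely drop the ancestral atoms, since the comparison already discards them.

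Second, and more importantly, your dyadic Borel--Cantelli sketch does not close the argument, as you concede. The event ``there exists $t$ covered by only finitely many intervals'' is not a countable union of events indexed by rationals, and self-similarity alone does not linearize it. The paper's Lemma~\ref{lemma1} handles this cleanly via Shepp's theorem on covering $\R_+$ by the shadows $(s_i,s_i+h_i)$ of a Poisson point process of intervals: Shepp's criterion~\eqref{Sh} is local at $h=0$, so it remains satisfied after you delete all atoms with $h_i>1/k$; hence for every $k$ the surviving short intervals already cover all of $\R_+$ a.s.; hence any $t$ covered by only finitely many shadows of the full process would, for $k$ large, be left uncovered by $\Pi_k$ --- a contradiction. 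Verifying Shepp's condition with the intensity $c\,h^{\alpha/(1-\alpha)}\,dh$ (which your excursion-length computation correctly produces) is then immediate and does not depend on $\theta$ or $\epsilon$. This is the key lemma missing from your proposal, and without it (or an equivalent argument) the proof does not go through.
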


One can get a first rough understanding of why this should be true by the following heuristic: Kingman's coalescent comes down from infinity at speed $2/t$, i.e. if $N_t$ is the number of blocks at time $t$ then $N_t \sim 2/t$ almost surely when $t\to0$. It is known that  (see \cite{BertoinLeGall2} or more recently \cite{labbe}) the process $(N_t, t\ge0)$ has the same law as the process of the number of atoms of the Fleming-Viot process. For a Beta-coalescent with parameter $\alpha\in(1,2)$ we have $N_t \sim c_\alpha t^{-1/(\alpha-1)}$ almost surely as $t\to 0$ (see \cite[Theorem 4]{bbs2}). Therefore Kingman's coalescent comes down from infinity much quicker than Beta-coaelscents. Since the speed at which the generalized Fleming-Viot processes looses types roughly corresponds to the speed at which the dual coalescent comes down from infinity, it is possible that $(\alpha,\theta)$-Fleming-Viot processes do not loose types fast enough,
and hence there are no exceptional times at which the number of types is finite.


\section{Auxiliary Constructions}

To prove Theorem \ref{T:main} we construct two auxiliary objects: a particular measure-valued branching process and a corresponding Pitman-Yor type representation. Those will
be used in Section \ref{111} to relate the question of exceptional times to covering results for point processes. In this section we give the definitions and state their relations to the Beta-Fleming-Viot
processes with mutations.\\
All appearing stochastic processes and random variables will be defined on a common stochastic basis $(\Omega, \mathcal G, \mathcal G_t, \P)$ that is rich enough to carry all Poisson point processes (PPP in short) that appear in the sequel.

\subsection{Measure-Valued Branching Processes with Immigration}\label{Sec:existence}
We recall that a continuous state branching process (CSBP in short) with $\alpha$-stable branching mechanism, $\alpha\in\,]1,2]$, is a Markov family $(P_v)_{v\geq 0}$ of probability measures on c\`adl\`ag trajectories with values in $\R_+$, such that  
 \begin{equation}\label{lt}
	E_v\big(e^{-\lambda X_t}\big) = e^{-v\,u_t(\lambda)},\qquad v\geq 0, \lambda \geq 0,
\end{equation}
where for $\psi:\R_+\mapsto\R_+$, $\psi(u):=u^\alpha$, we have the evolution equation
\[
	u'_t(\lambda) = - \psi(u_t(\lambda)),	\hspace{.3in} u_0(\lambda) = \lambda.
\]
For $\alpha=2$, $\psi(u)=u^2$ is the branching mechanism for Feller's branching diffusion, where $P_v$ is the law of the unique solution
to the SDE 
\begin{equation}\label{Feller}
	X_t=v+\int_0^t\sqrt{2X_s}\,dB_s, \qquad t\geq 0,
\end{equation}
driven by a Brownian motion $(B_t)_{t\geq 0}$. On the other hand, for $\alpha\in\, ]1,2[$, $\psi(u)=u^\alpha$ gives the so-called $\alpha$-stable branching processes which can be defined as the unique strong solution of the SDE
\begin{equation}\label{4b}
	X_t=v+\int_0^tX_{s-}^{1/\alpha}\,dL_s, \qquad t\geq 0,
\end{equation}
driven by a spectrally positive $\alpha$-stable L\'evy process $(L_t)_{t\geq 0}$, with L\'evy measure given by 
\[
\un{(x>0)}\, c_\alpha \, x^{-1-\alpha}\,dx, \qquad c_\alpha:=\frac{\alpha(\alpha-1)}{\Gamma(2-\alpha)}.
\]
Note that strong existence and uniqueness for~\eqref{4b} 
follows from the fact that the function $x\mapsto x^{1/\alpha}$ is Lipschitz outside zero, and hence 
strong existence and uniqueness holds for~\eqref{4b} until $X$ hits zero. 
Moreover $X$, being a non-negative martingale,  stays at zero forever after hitting it.
 For a more extensive discussion on strong solutions 
for jumps SDEs see~\cite{FuLi} and~\cite{LiMytnik}.

The main tool that we introduce is a particular {\it measure-valued branching process with interactive immigration} (MBI in short). For a textbook treatment of this subject we refer to Li \cite{Li}. Following Dawson and Li \cite{DawsonLi2}, we are not going to introduce the MBIs via their infinitesimal generators but as strong solutions of a system of stochastic differential equations instead. 
 On $(\Omega, \mathcal G, \mathcal G_t,\P)$, let us consider a Poisson point process
 $\mathcal N=(r_i,x_i,y_i)_{i\in I}$ on $(0,\infty)\times (0,\infty)\times (0,\infty)$ adapted  to 
$ \mathcal G_t$ and with intensity measure 
\begin{equation}\label{nu}
\nu(dr,dx,dy):=\un{(r>0)} \, dr\, \otimes \, c_\alpha \, \un{(x>0)}\, x^{-1-\alpha}\,dx \, \otimes \, \un{(y>0)}\, dy. 
\end{equation}
Throughout the paper we adopt the notation
\[
\tilde\cN:=\cN-\nu,
\]
i.e. $\tilde\cN$ is the compensated version of $\cN$.
It was shown in \cite{DawsonLi2} that the solution to \eqref{4b} 
has the same law as the unique strong solution to the SDE
\begin{equation}\label{4}
		X_t= X_0+\int_{]0,t]\times\R_+\times\R_+} \un{(y<X_{r-})}\, x  \, \tilde\cN(dr,dx,dy)
\end{equation}
with $X_0=v$. 

Now we are going to switch to the measure-valued setting. The real-valued process  $X$ in~\eqref{4b}, \eqref{4} describes the evolution of the {\it total mass} of the CSBP starting at time zero at the mass $X_0=v$. We are going to consider all initial masses $v\in[0,1]$ simultaneously, constructing a process $(\mathbf{X}_t)_{t\geq 0}$ taking values in the space $\cM^{F}_{[0,1]}$ of finite measures on $[0,1]$, endowed with the narrow topology. Assume that at time $t=0$, $\mathbf{X}_0$ is a finite measure on $[0,1]$ with cumulative distribution function $(F(v), v\in[0,1])$, and denote 
$$ X_t(v):= \mathbf{X}_t([0,v]), \quad t\geq 0, v\in[0,1].$$
Then the measure-valued branching process $(\mathbf{X}_t)_{t\geq 0}$ can be constructed in such a way that for each $v$, $(X_t(v))_{t\geq 0}$ 
solves~\eqref{4} with~$X_0=F(v)$, and with the same driving noise for all $v\in[0,1]$. 
In what follows, we deal with a version of \eqref{4} including an immigration term only depending on  the total-mass $X_t(1)$:
\begin{equation}\label{(2.7)} 
	\begin{cases} 
		X_t(v) = F(v) +\int_{]0,t]\times\R_+\times\R_+} \un{(y<X_{r-}(v))}\, x  \, \tilde\cN(dr,dx,dy) + I(v) \int_0^t  g(X_{s}(1))\, ds, 
	\\	v \in [0,1], \ t\geq 0,
	\end{cases}
\end{equation}
where $(I(v), v\in[0,1])$ is the cumulative distribution function of a finite measure on $[0,1]$ and we assume 
\begin{enumerate}
\item[\textbf{(G)}]  $g : \R_+ \mapsto \R_+$ is monotone non-decreasing, continuous and
locally Lipschitz continuous away from zero. 
\end{enumerate} 
\begin{definition}\label{def:2.1}
	An $\cM^{F}_{[0,1]}$-valued  process $(\textbf X_t)_{t\geq 0}$ 
  on $(\Omega, \mathcal G, \mathcal G_t,\P)$
 is called a solution to \eqref{(2.7)} if
	\begin{itemize}
		\item it is c\`adl\`ag $\P$-a.s.,
		\item for all $v\in [0,1]$, setting $X_t(v):=\textbf X_t([0,v])$,  $(X_t(v))_{v\in[0,1],t\geq 0}$ satisfies   $\P$-a.s.
\end{itemize}
	Moreover, a solution $(\textbf X_t)_{t\geq 0}$ is strong if it is adapted to the natural filtration $\cF_t$ generated by $\mathcal N$.
	Finally, we say that pathwise uniqueness holds if 
	\begin{align*}
		\P\big(\textbf X^1_t=\textbf X^2_t, \ \forall t\geq 0\big)=1,
	\end{align*}
	for any two solutions $\textbf X^1$ and $\textbf X^2$ on $(\Omega, \mathcal G, \mathcal G_t,\P)$ driven by the same Poisson point process.
\end{definition}
Here is a well-posedness result for \eqref{(2.7)}:

\begin{theorem}\label{2.2} Let $F$ and $I$ be as above. 
	For any immigration mechanism $g$ satisfying Assumption \textbf{(G)},  there is a strong solution $(\textbf X_t)_{t\geq 0}$ to \eqref{(2.7)} and pathwise uniqueness holds until $T_0:= \inf\{t \ge 0 : \textbf{X}_t([0,1]) =0\}$. 
	\end{theorem}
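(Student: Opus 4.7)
The plan is to exploit the hierarchical structure of \eqref{(2.7)}: the total-mass process $(X_t(1))_{t\geq 0}$ satisfies a closed one-dimensional jump SDE, and once this is solved the equations for $v\in[0,1)$ decouple and can all be driven by the same Poisson point process $\cN$ together with an $\cF_t$-adapted deterministic drift. I would therefore solve \eqref{(2.7)} in three stages: first the total mass, then each individual level $v$, then assemble a measure-valued process by a monotonicity argument in $v$.

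First, specializing \eqref{(2.7)} to $v=1$ gives
\begin{equation*}
X_t(1) = F(1) + \int_{]0,t]\times\R_+\times\R_+} \un{(y<X_{r-}(1))}\, x\, \tilde\cN(dr,dx,dy) + I(1) \int_0^t g(X_s(1))\, ds.
\end{equation*}
The noise term here is identical to the one appearing in \eqref{4}, and the added drift $s\mapsto I(1)g(X_s(1))$ is non-decreasing and locally Lipschitz on $(0,\infty)$ by Assumption \textbf{(G)}. I would invoke the Yamada-Watanabe arguments of \cite{FuLi,LiMytnik} for jump SDEs of this form: the effective noise coefficient produced by the thinning $\un{(y<X_{r-})}$ is $1/\alpha$-H\"older continuous, which is exactly the regularity covered by the Li--Mytnik refinement when $\alpha\in\,]1,2[$. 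Combined with the Lipschitz character of the drift on $[\varepsilon,\infty)$, this yields strong existence and pathwise uniqueness of $(X_t(1))_{t\geq 0}$ up to $T_0$.

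Next, treating $(X_t(1))_{t\geq 0}$ as known, the process $h(t):=\int_0^t g(X_s(1))\,ds$ becomes an $\cF_t$-adapted, continuous, non-decreasing drift. For each fixed $v\in[0,1)$ equation \eqref{(2.7)} then reads
\begin{equation*}
X_t(v) = F(v) + \int_{]0,t]\times\R_+\times\R_+} \un{(y<X_{r-}(v))}\, x\, \tilde\cN(dr,dx,dy) + I(v)\, h(t),
\end{equation*}
which is again in the Fu--Li/Li--Mytnik class with an exogenous non-decreasing drift, and hence admits a unique strong solution on $[0,T_0]$ driven by the same $\cN$. Because the $v$-solutions are built from the same PPP, a pathwise comparison argument in the spirit of \cite{DawsonLi2} applies: since $F$ and $I$ are non-decreasing in $v$ and the coupling through $\un{(y<X_{r-}(v))}$ is monotone in $v$, one has $X_t(v_1)\leq X_t(v_2)$ almost surely whenever $v_1\leq v_2$. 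Monotonicity on a countable dense $D\subset[0,1]$ together with right-continuity in $v$ allows me to define $\mathbf{X}_t([0,v]) := \lim_{w\downarrow v,\,w\in D} X_t(w)$, which is the CDF of a finite measure; the c\`adl\`ag property in $t$ for the narrow topology follows from the pointwise c\`adl\`ag property of each $X_\cdot(v)$, the uniform bound $\mathbf{X}_t([0,1])=X_t(1)$, and the fact that $\{\un{[0,v]}:v\in D\}$ is a determining class for finite measures on $[0,1]$.

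The main technical obstacle is concentrated in the first step. The thinning indicator $\un{(y<X_{r-})}$ makes the Poisson integral behave like a stochastic integral against a spectrally positive $\alpha$-stable process with diffusion coefficient $x\mapsto x^{1/\alpha}$, which is only H\"older continuous near zero, so classical Lipschitz SDE theory does not apply. The Yamada--Watanabe approximations of \cite{FuLi,LiMytnik}, which combine smooth majorants of $|x|^{2/\alpha-1}$ with careful control of the small-jump compensator, are precisely what is needed to handle this. Once pathwise uniqueness of the total mass is secured, the remaining steps are essentially bookkeeping: the level-wise well-posedness is an immediate consequence of the same theory with an extra Lipschitz drift, and the monotonicity in $v$ needed to assemble the measure-valued process is preserved by the common driving PPP.
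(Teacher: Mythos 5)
Your proposal is essentially correct, but it follows a genuinely different route from the paper on the existence side. The paper does \emph{not} obtain strong existence by appealing directly to Fu--Li/Li--Mytnik level by level. Instead it first builds a weak solution explicitly through the Pitman--Yor type excursion representation of Theorem \ref{2.4}: the process $Z_t(v)=\sum_i w^i_t\,\un{(a_i\le v)}\,\un{(u_i\le g(Z_{s_i-}(1)))}$ is shown (Proposition \ref{eqv} and the surrounding lemmas) to satisfy \eqref{(2.7)} for a suitably constructed PPP $\cN$, and the measure-valued process is then the atomic measure $\mathbf{X}_t=\sum_i w^i_t\,\delta_{\cdot}$; once pathwise uniqueness is established, strong existence follows from the abstract Yamada--Watanabe theorem in Kurtz's general form. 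Your approach, by contrast, aims for strong existence directly from one-dimensional jump-SDE theory, first for the total mass and then for each level $v$ with the drift $h(t)=\int_0^t g(X_s(1))\,ds$ frozen. This is workable, but it requires two points you glide over: (a) the Fu--Li/Li--Mytnik results are stated for deterministic (state-dependent) drift coefficients, so the step with the random adapted drift $h$ needs a small extension or, as the paper does, a difference trick so the drift cancels; and (b) assembling the CDFs into a c\`adl\`ag measure-valued path relies on a comparison theorem in $v$, which you correctly attribute to Dawson--Li but which the paper avoids because its atomic-sum construction makes monotonicity in $v$ automatic.

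On the uniqueness side the two proofs are essentially identical: isolate the total mass $X_t(1)$, which solves an autonomous equation treated by Dawson--Li/Fu--Li, and then observe that for $v<1$ the immigration term depends only on $X_t(1)$, so the difference $X^1_t(v)-X^2_t(v)$ has no drift and one is back to the driftless case. The main thing the paper's route buys you is that the excursion representation (Theorem \ref{2.4}) is needed anyway for the covering argument in the proof of Theorem \ref{T:main} and for Corollary \ref{C: pure atom}, so obtaining existence through it is economical; your route is more self-contained as a proof of Theorem \ref{2.2} alone, but would still require proving Theorem \ref{2.4} separately later.
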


The proof of Theorem \ref{2.2} relies on ideas from recent articles on pathwise uniqueness for jump-type SDEs such as Fu and Li \cite{FuLi} or Dawson and Li \cite{DawsonLi2}.
Our equation \eqref{(2.7)} is more delicate since all coordinate processes depend on the total-mass $X_t(1)$. The uniqueness statement is first deduced 
for the total-mass $(X_t(1))_{t\geq 0}$ and then for the other coordinates interpreting the total-mass as random environment. 
To construct a (weak) solution we use a (pathwise) Pitman-Yor type representation as explained in the next section.

\subsection{A Pitman-Yor Type Representation for Interactive MBIs}\label{Sec:RayKnight}

Let us denote by $\cE$ the set of c\`adl\`ag trajectories $w:\R_+\mapsto\R_+$ such that $w(0)=0$, $w$ is positive on a bounded interval $]0,\zeta(w)[$ and
$w\equiv 0$ on $[\zeta(w),+\infty[$. 
We recall the construction of the excursion measure of the $\alpha$-stable CSBP $(P_v)_{v\geq 0}$, also called the {\it Kuznetsov measure}, 
see \cite[Section 4]{Li02} or \cite[Chapter 8]{Li}:
For all $t\geq 0$, let $K_t(dx)$ be the unique $\sigma$-finite measure on $\R_+$ such that
\[
\int_{\R_+} \left(1-e^{-\lambda\, x}\right) K_t(dx) = u_t(\lambda) = \left( \lambda^{1-\alpha} + (\alpha-1)t\right)^{\frac1{1-\alpha}}, \qquad  \lambda\geq 0,
\]
where we recall that the function $(u_t(\lambda))_{t\geq 0}$ is the unique solution to the equation
\[
u_t(\lambda) + \int_0^t (u_s(\lambda))^\alpha \, ds = \lambda, \qquad t\geq 0, \ \lambda\geq 0.
\]
We also denote by $Q_t(x,dy)$ the Markov transition semigroup of $(P_v)_{v\geq 0}$. Then there exists a unique Markovian $\sigma$-finite measure $\bbQ$ on $\cE$
with entrance law $(K_t)_{t\geq 0}$ and transition semigroup $(Q_t)_{t\geq 0}$, i.e. such that for all $0< t_1< \cdots<t_n$, $n\in\N$,
\begin{equation}\label{bbQ}
\begin{split}
& \bbQ(w_{t_1}\in dy_1, \ldots, w_{t_n}\in dy_n, \, t_n<\zeta(w)) 
\\ & = K_{t_1}(dy_1) \, Q_{t_2-t_{1}}(y_{1}, dy_2) \cdots Q_{t_n-t_{n-1}}(y_{n-1}, dy_n).
\end{split}
\end{equation}
By construction
\begin{equation}\label{laplace}
  \int_{\cE} \left(1-e^{-\lambda\, w_s}\right) \, \bbQ(dw) = u_s(\lambda) = \left( \lambda^{1-\alpha} + (\alpha-1)s\right)^{\frac1{1-\alpha}}, \qquad s\geq 0, \ \lambda\geq 0,
\end{equation}
and under $\bbQ$, for all $s>0$, conditionally on $\sigma(w_r, r\leq s)$, $(w_{t+s})_{t\geq 0}$ has law $P_{w_s}$. The $\sigma$-finite measure $\bbQ$ is called the {\it  excursion measure}  of the CSBP \eqref{4b}. By \eqref{laplace}, it is easy to check that for any $s>0$
\begin{equation}
\label{Qmom}
\int_\cE w_{s} \, \bbQ(dw) = \left. \frac{\partial}{\partial\lambda} u_s(\lambda) \right|_{\lambda=0} = \lim_{\lambda\downarrow 0} 
\left( 1 + \lambda^{\alpha-1}(\alpha-1)s\right)^{\frac\alpha{1-\alpha}} = 1.
\end{equation}
In Duquesne-Le Gall's setting \cite{LeGallDuquesne}, under the $\sigma$-finite measure $\bbQ$ with infinite total mass, $w$ has the distribution of $(\ell^a(e))_{a\geq 0}$ under $n(de)$, where $n(de)$ is the excursion measure of the height process $H$ and $\ell^a$ is the local time at level $a$. For the more general superprocess setting see for instance Dynkin and Kuznetsov \cite{DK}. 

\smallskip
We need now to extend the space of excursions as follows:
\[
\cD:=\{w:\R_+\mapsto\R_+: \ \exists s\geq 0, \, w\equiv 0 \ {\rm on} \ [0,s], \
w_{\cdot-s}\in \cE\},
\]
i.e. $\cD$ is the set of c\`adl\`ag trajectories $w:\R_+\mapsto\R_+$ such that $w$ is equal to 0 on $[0,s(w)]$, $w$ is positive on a bounded interval $]s(w),s(w)+\zeta(w)[$ and $w\equiv 0$ on $[s(w)+\zeta(w),+\infty[$. For $s\geq 0$, we denote by $\bbQ_s(dw)$ the $\sigma$-finite measure on $\cD$ given by 
\begin{equation}\label{Q_s}
\int_\cD \Phi(w) \, \bbQ_s(dw) := \int_\cE \Phi\left(\un{(\cdot\geq s)}\, w_{\cdot-s}\right) \, \bbQ(dw),
\end{equation}
i.e. $\bbQ_s$ is the image measure of $\bbQ$ under the map 
\begin{equation}\label{gamma}
w\mapsto \gamma_t := \un{(t\geq s)} \, w_{t-s}, \qquad t\geq 0.
\end{equation}
Let us consider a Poisson point process $(s_i, u_i,a_i,w^i)_{i\in I}$ on 
$\R_+\times\R_+\times\cD$ with intensity measure 
\begin{equation}\label{Gamma}
\Gamma(ds,du,da,dw) := \left(\delta_0(ds)\otimes \delta_0(du) \otimes F(da) +
ds\otimes du \otimes I(da)\right)\otimes \bbQ_s(dw)
\end{equation}
where $F$ and $I$ are the cumulative distribution functions appearing in \eqref{(2.7)}. An atom  $(s_i, u_i,a_i,w^i)$ is a population that has immigrated at time $s_i$ whose size evolution is given by $w^i$ and whose genetic type is given by $a_i$. The coordinate $u_i$ is used for thinning purposes, to decide wether or not this particular immigration really happened or not.

\begin{theorem}\label{2.4}
Suppose $g:\R_+\mapsto\R_+$ satisfies Assumption \textbf{(G)}.
Then, for all $v\in[0,1]$, there is a unique c\`adl\`ag process $(Z_t(v), t\geq 0)$ on $(\Omega, \mathcal G, \mathcal G_t, \P)$ satisfying $\P$-a.s.
\begin{equation}\label{Z}
\begin{cases} 
Z_t(v) = \sum_{s_i=0} w_{t}^i \, \un{(a_i \le v)} +
\sum_{s_i>0} w_{t}^i \,\un{(a_i \le v)} \un{(u_i \le g(Z_{s_i-}(1)))},\qquad t> 0,\\
Z_0(v)=F(v).
\end{cases}
\end{equation}
Moreover, we can construct on $(\Omega, \mathcal G, \mathcal G_t, \P)$ a PPP $\cN$ with intensity $\nu$ given by \eqref{nu} such that $Z$ solves \eqref{(2.7)} with respect to $\mathcal N$.
\end{theorem}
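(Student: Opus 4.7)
The claim has two parts: the fixed-point construction of $Z$ from the Poisson point process $\Gamma$, and the construction of a compatible PPP $\cN$ of intensity $\nu$ so that $Z$ satisfies \eqref{(2.7)}. I would handle them in this order.

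\textbf{Part 1: fixed-point construction of $Z$.} The difficulty is the implicit thinning $\un{(u_i\le g(Z_{s_i-}(1)))}$ in \eqref{Z}, which couples $Z$ to itself. The plan is a monotone iteration exploiting that $g$ is non-decreasing. Write $Y_t(v):=\sum_{s_i=0}w_t^i\un{(a_i\le v)}$; this is the classical Pitman--Yor representation of an $\alpha$-stable measure-valued CSBP starting from the measure with c.d.f.\ $F$, well-defined and càdlàg by the entrance-law structure and \eqref{Qmom}. Introduce the operator
\[
\Phi(\zeta)_t := Y_t(1) + \sum_{s_i>0} w_t^i\,\un{(u_i\le g(\zeta_{s_i-}))},
\]
so that $Z(1)$ is to be a fixed point of $\Phi$. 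Since $g$ is monotone, $\Phi$ is monotone in $\zeta$, and the iterates $Z^{(0)}:=Y(1)$ and $Z^{(n+1)}:=\Phi(Z^{(n)})$ form a non-decreasing sequence $Z^{(n)}\uparrow Z^*$.

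To control explosion and the infinitely many small-immigration atoms per unit time (the measure $\bbQ_s$ is $\sigma$-finite), I would first replace $g$ by the truncation $g_M:=g\wedge M$. By \eqref{Qmom} and the intensity formula \eqref{Gamma}, the $n$-th iterate has mean bounded by $F(1)+M\,I([0,1])\,t$, so the monotone limit $Z^M$ is finite, càdlàg and satisfies the truncated fixed-point equation by monotone and dominated convergence applied on each atom. Pathwise uniqueness for the truncated problem follows from a Gronwall estimate on $\E[|Z^{1,M}_t-Z^{2,M}_t|]$ using the Lipschitz constant of $g_M$ on $[\epsilon,\infty)$ and \eqref{Qmom}; the set $\{Z^{i,M}_{s-}<\epsilon\}$ is handled by sending $\epsilon\downarrow 0$ using continuity of $g$ at $0$. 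Letting $M\uparrow\infty$ and setting $\tau_M:=\inf\{t:Z^M_t\ge M\}$, the solutions patch consistently, and under \textbf{(G)} one verifies $\tau_M\to\infty$ almost surely from integrability of the monotone limit. Once $(Z_t(1))_{t\ge 0}$ is known, the family $(Z_t(v))_{v\in[0,1]}$ is read off from the explicit formula since only $Z(1)$ enters the thinning.

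\textbf{Part 2: construction of $\cN$ and verification of \eqref{(2.7)}.} A single excursion $w^i$ is an $\alpha$-stable CSBP trajectory, and by \eqref{4b} its jumps at time $r$ of size $x$ occur with intensity $w^i_{r-}\cdot c_\alpha x^{-1-\alpha}\,dx\,dr$. Summing over excursions contributing to $Z(v)$ yields total jump intensity $Z_{r-}(v)\cdot c_\alpha x^{-1-\alpha}\,dx\,dr$, matching the integrand $\un{(y<X_{r-}(v))}\,x$ in \eqref{(2.7)} after integration in $y$. To realise $\cN$ pathwise, for each jump of $w^i$ at time $r$ of size $x$ with type $a_i$, I would place an atom of $\cN$ at $(r,x,y_i)$ with $y_i$ uniform on the interval $[Z_{r-}([0,a_i)),\, Z_{r-}([0,a_i))+w^i_{r-})$: this is the unique interval of length $w^i_{r-}$ for which $y_i<Z_{r-}(v)$ iff $a_i\le v$. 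The remaining atoms, those with $y>Z_{r-}(1)$, are supplied by an independent PPP of intensity $c_\alpha x^{-1-\alpha}\,dx\,dy\,dr$, so that the combined $\cN$ has intensity $\nu$ from \eqref{nu}. The immigration drift $I(v)\int_0^t g(Z_s(1))\,ds$ in \eqref{(2.7)} then appears as the compensator, via the thinning/projection formula for $\Gamma$, of the initial contributions of accepted immigration atoms. The hard step is the truncation/Gronwall argument in Part 1: the $\sigma$-finiteness of the excursion measure forces one to handle infinitely many immigration events per unit time, and the absence of a growth bound on $g$ in \textbf{(G)} requires careful localization to avoid circular use of the integrability of $Z$ itself.
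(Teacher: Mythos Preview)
Your overall approach matches the paper's: monotone Picard iteration for the fixed point, a Gronwall estimate for uniqueness, and the same ``stacking'' construction of $\cN$ (place the $y$-coordinate of a jump of $w^i$ uniformly in the interval $[L^i_r,L^i_r+w^i_{r-})$, then fill in above the total mass with an independent PPP). The $\cN$-construction you sketch is exactly what the paper does in Proposition~\ref{N} and Proposition~\ref{eqv}.

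There is, however, a genuine gap in your Part~1. The sentence ``the monotone limit $Z^M$ is finite, c\`adl\`ag and satisfies the truncated fixed-point equation by monotone and dominated convergence'' does not go through: a monotone pointwise limit of c\`adl\`ag functions need not be c\`adl\`ag, and there are infinitely many immigration atoms per unit time, so you cannot argue atom by atom. The paper resolves this by \emph{reversing the order of your two parts}. It first proves the SDE representation (your Part~2) in the more general setting where $g(Z_{s-}(1))$ is replaced by an arbitrary predictable process $V_s$; this is Proposition~\ref{eqv} combined with Lemma~\ref{3.4}. Then, in the Picard iteration, the difference $Z^{n+k+1}-Z^{n+1}$ is itself written as a solution of a linear SDE of the form \eqref{(2.7)} driven by some PPP $\cN^{n,k}$, and the Doob-type bound of Lemma~\ref{martingale} yields convergence in the norm $\E\sup_{t\in[0,T]}|\cdot|$. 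This is what delivers the c\`adl\`ag limit and the validity of \eqref{Z} for all $t$ simultaneously. In short, your Part~2 is not just the second half of the theorem---it is the key tool that makes Part~1 work.

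A secondary point: your truncation $g_M=g\wedge M$ and localization by $\tau_M$ is more than the paper uses. The paper's argument in Section~3.2 simply invokes a global Lipschitz constant $L$ for $g$; growth/explosion is controlled directly by the recursion $\E(Z^{n+1}_t)\le C+L\int_0^t\E(Z^n_s)\,ds$, giving $\E(Z_t)\le Ce^{Lt}$. Your claim that ``under \textbf{(G)} one verifies $\tau_M\to\infty$ from integrability of the monotone limit'' is circular as stated, since integrability of the limit is precisely what non-explosion would give you.
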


If $I(1)=1$, then in the special case of branching mechanism $\psi(\lambda)=\lambda^2$ and constant immigration rate  $g\equiv \theta$, the total-mass process $X_t=X_t(1)$ for \eqref{(2.7)} also solves 
\begin{align*}
\begin{cases} 
	dX_t=\sqrt{2 X_t}\,dB_t+\theta \,dt,\qquad t\geq 0,\\
X_0=F(1).
\end{cases} 
\end{align*}
for which Pitman and Yor obtained the excursion representation in their seminal paper \cite{PY}.
\begin{rem} {\rm
	The recent monograph \cite{Li} by Zenghu Li contains a full theory of this kind of
	Pitman-Yor type representations for measure-valued branching processes, see in particular Chapter 10. We present a different approach below which shows directly how the 
different Poisson point processes in \eqref{(2.7)} and in \eqref{Z} are related to each other. The most important feature of our construction is that it relates
the excursion construction and the SDE construction on a pathwise level.
	}
\end{rem}

Observe that an immediate and interesting corollary of Theorem \ref{2.4} is the following:
\begin{cor}\label{C: pure atom}
Let $g$ be an immigration mechanism satisfying assumption \textbf{(G)} and let $(\mathbf{X}_t)_{t\ge 0}$ be a  solution to  \eqref{(2.7)}. Then almost surely, $\mathbf{X}_t$ is purely atomic for all $t\ge0$.
\end{cor}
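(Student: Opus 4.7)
The plan is to read the statement off directly from the pathwise Pitman-Yor representation established in Theorem \ref{2.4}. By the uniqueness part of Theorem \ref{2.2}, and since the process is identically zero (hence trivially purely atomic) after extinction, I can work on the almost-sure event on which $\mathbf{X}_t([0,v]) = Z_t(v)$ for all $v \in [0,1]$ and all $t \geq 0$, where $(Z_t(v))$ is built from the Poisson point process $(s_i, u_i, a_i, w^i)_{i\in I}$ with intensity $\Gamma$ of \eqref{Gamma}.

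On this event, for every fixed $t \geq 0$, the representation \eqref{Z} writes the cumulative distribution function
\[
 v \longmapsto \mathbf{X}_t([0,v]) \;=\; \sum_{s_i = 0} w^i_t \, \un{(a_i \leq v)} \;+\; \sum_{s_i > 0} w^i_t \, \un{(a_i \leq v)} \, \un{(u_i \leq g(Z_{s_i-}(1)))}
\]
as a countable sum of elementary right-continuous step functions of the form $c_i \, \un{(a_i \leq v)}$ with $c_i \geq 0$. The sum is indexed by a countable set (atoms of a PPP with $\sigma$-finite intensity) and converges, since by construction $\mathbf{X}_t \in \cM^F_{[0,1]}$.

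Since a finite Borel measure on $[0,1]$ is uniquely determined by its cumulative distribution function, the above identity of distribution functions forces the measure identity
\[
 \mathbf{X}_t \;=\; \sum_{s_i = 0} w^i_t \, \delta_{a_i} \;+\; \sum_{s_i > 0} \un{(u_i \leq g(Z_{s_i-}(1)))} \, w^i_t \, \delta_{a_i},
\]
which is visibly a purely atomic measure supported on the countable set $\{a_i : i\in I\}$. Because the PPP representation \eqref{Z} is pathwise and holds simultaneously for every $t \geq 0$ on the same event of full probability, the statement "$\mathbf{X}_t$ is purely atomic for all $t\geq 0$" follows at once.

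I do not see a real obstacle in this step: once Theorem \ref{2.4} is available, the corollary is essentially a bookkeeping argument translating the pathwise excursion decomposition of $Z_t(v)$ into a Dirac-mass decomposition of $\mathbf{X}_t$. The only point one might want to spell out is the elementary fact that the inverse of a countable, random sum of step functions $\sum_i c_i\un{(a_i\leq\cdot)}$ is exactly the atomic measure $\sum_i c_i \delta_{a_i}$.
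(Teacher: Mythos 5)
Your proposal is correct and is exactly the argument the paper intends: the text introduces the corollary with the words ``an immediate corollary of Theorem~\ref{2.4}'' and gives no further detail, and what you have written is precisely the bookkeeping that makes ``immediate'' honest. Reading off from the Pitman--Yor representation \eqref{Z} that the cumulative distribution function $v\mapsto \mathbf X_t([0,v])$ is a countable sum of step functions $c_i\,\un{(a_i\le v)}$, and identifying this with the purely atomic measure $\sum_i c_i\,\delta_{a_i}$, is the whole content. The simultaneity over $t$ is also handled correctly: the excursion decomposition holds pathwise, and for rational $v$ one gets the identity for all $t$ on a single null-complemented event, which then extends to all $v$ by monotone right-continuity of the CDF.

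One small remark, which does not affect the validity of the conclusion but slightly misstates the mechanism: your parenthetical ``since the process is identically zero (hence trivially purely atomic) after extinction'' need not hold for a general immigration mechanism $g$ satisfying \textbf{(G)}. Indeed, if $g(0)>0$ the immigration term persists after the total mass first hits zero, and the solution constructed via the Pitman--Yor representation may well revive. The cleaner way to phrase the step is simply that the Pitman--Yor identity \eqref{Z} continues to represent $Z_t(v)$ as a countable sum of step functions for all $t\ge 0$, extinction or not, so the atomicity conclusion follows for all $t\ge 0$ directly without a case distinction at $T_0$. (For the main application of the paper, $g(x)=\theta x^{2-\alpha}$ has $g(0)=0$, and your formulation is then correct as stated.)
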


In the proof of our Theorem \ref{T:main} we make use of the fact that the Pitman-Yor type representation is well suited for comparison arguments. If $g$ can be bounded from above or below by a constant, then the righthand side of \eqref{(2.7)} can be compared to an explicit PPP for which general theory can be applied.

\subsection{From MBI  to Beta-Fleming-Viot Processes with Mutations}\label{Sec:Transfer}
Let us first recall an important characterization started in \cite{BertoinLeGall2} and completed in \cite{DawsonLi2} which relates Fleming-Viot processes, defined as measure-valued Markov processes by the generator \eqref{E:GFVmutation generator}, and strong solutions to stochastic equations. 
\begin{theorem}[Dawson and Li \cite{DawsonLi2}]\label{T:DL}
Let  $\Lambda$ be the Beta distribution with parameters $(2-\alpha, \alpha)$. 
Suppose $\theta\geq 0$ and $\mathcal M$ is a non-compensated Poisson point process on $(0,\infty)\times [0,1]\times [0,1]$ with intensity $ds \otimes y^{-2} \Lambda(dy) \otimes du$. Then there is a unique strong solution $(Y_t(v))_{t\geq 0,v\in[0,1]}$ to
\begin{equation}\label{FVI} 
	\begin{cases} 
		Y_t(v) = v + \int_{]0,t]\times[0,1]\times[0,1]} y \left[  \un{(u \le Y_{s-}(v))} -Y_{s-}(v) \right] \cM(ds, dy, du) +\theta \int_0^t [v-Y_{s}(v)]ds,\\
		v \in[0,1], \ t\ge 0,
	\end{cases}
\end{equation}
and the measure-valued process $\textbf Y_t([0,v]):={Y_t(v)}$ is an
$(\alpha,\theta)$-Fleming-Viot process  started at uniformly distributed initial condition. 
\end{theorem}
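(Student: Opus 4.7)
The statement bundles two tasks: well-posedness of the SDE \eqref{FVI} and identification of the associated measure-valued process $\mathbf Y_t([0,v]):=Y_t(v)$ as the $(\alpha,\theta)$-Fleming-Viot process. The plan is to handle well-posedness by a Yamada--Watanabe-type argument adapted to jump SDEs in the vein of Fu--Li~\cite{FuLi}, and then to verify the generator \eqref{E:GFVmutation generator} by applying It\^o's formula to test functions of the form $\phi(\mathbf Y) = F(\langle f, \mathbf Y\rangle)$.

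For pathwise uniqueness, fix $v\in[0,1]$, let $Y^1, Y^2$ be two solutions driven by the same noise $\mathcal M$, and set $D_s := Y^1_s(v) - Y^2_s(v)$. I would approximate $|x|$ by smooth nonnegative $\psi_n$ with $\psi_n''$ concentrating near $0$, apply It\^o to $\psi_n(D_t)$, and control the Taylor remainder of the compensated jump integral. Three pieces of structure do the work: first, the jump integrand is already centred over $u$, since $\int_0^1 [\un{(u \le Y_{s-})} - Y_{s-}]\,du = 0$; second, the $u$-integral of the absolute indicator difference equals $|D_{s-}|$, namely $\int_0^1 |\un{(u\le Y^1_{s-})} - \un{(u\le Y^2_{s-})}|\,du = |D_{s-}|$; third, the quadratic cost is controlled by $\int_0^1 y^2\cdot y^{-2}\,\Lambda(dy) = \Lambda([0,1]) < \infty$, which tames the singular behaviour of $y^{-2}\Lambda(dy)$ near the origin. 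Together with the Lipschitz drift $\theta(v-Y_s(v))$, this yields
\begin{equation*}
\E|D_t(v)| \le C\int_0^t \E|D_s(v)|\,ds,
\end{equation*}
and Gronwall gives $D\equiv 0$. Strong existence follows by truncating the jumps of size $y<\varepsilon$, solving the resulting finite-activity SDE by piecewise-deterministic evolution between jumps, and passing to the limit $\varepsilon\downarrow 0$ using the tightness afforded by the same estimate. Monotonicity $v_1\le v_2\Rightarrow Y_t(v_1)\le Y_t(v_2)$, the boundary values $Y_t(0)=0$ and $Y_t(1)=1$, and c\`adl\`agness in $v$ are all preserved, so $\mathbf Y_t([0,v]):=Y_t(v)$ is a well-defined probability measure on $[0,1]$.

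To identify the law of $\mathbf Y$, I would apply It\^o to $\phi(\mathbf Y_t) = F(\langle f, \mathbf Y_t\rangle)$ for smooth $F$ and $f\in C([0,1])$. The key observation is that at a jump $(s,y,u)$ of $\mathcal M$, writing $a = a(u) := \inf\{v : Y_{s-}(v) \ge u\}$, one has $\un{(u \le Y_{s-}(v))} = \un{(a\le v)}$ and hence the post-jump measure is exactly $(1-y)\mathbf Y_{s-} + y\delta_a$. Since $u$ is uniform on $[0,1]$ and $u\mapsto a(u)$ pushes Lebesgue measure forward to $\mathbf Y_{s-}$, integrating over $u$ replaces $du$ by $\mathbf Y_{s-}(da)$, and this reproduces precisely the first term of \eqref{E:GFVmutation generator}. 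The drift contribution $\frac{d}{dt}\langle f, \mathbf Y_t\rangle_{\text{drift}} = \theta\bigl[\int_0^1 f(y)\,dy - \langle f, \mathbf Y_t\rangle\bigr]$ matches the mutation operator \eqref{E:IAM mutation operator} via the chain rule. Well-posedness of the martingale problem for \eqref{E:GFVmutation generator} then pins down $\mathbf Y$ in law as the $(\alpha,\theta)$-Fleming-Viot process started from Lebesgue measure.

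The main obstacle is pathwise uniqueness: the jump integrand $\un{(u \le Y_{s-}(v))}$ is genuinely discontinuous in $Y$, not just non-Lipschitz, and the intensity $y^{-2}\Lambda(dy)$ is non-integrable at $0$, so a naive Gronwall estimate fails. The argument succeeds only because one can separate the singular-in-$y$ (linear) and integrable-in-$y$ (quadratic) contributions to the Taylor remainder of $\psi_n$, and because integrating the indicator difference in $u$ produces $|D_{s-}|$ rather than something worse; making these cancellations rigorous with the Yamada--Watanabe approximation is the technical heart of Fu--Li's method.
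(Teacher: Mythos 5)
The paper does not reprove this theorem; it is a direct citation of Dawson and Li, specifically Theorem 4.4 (well-posedness of \eqref{FVI}) and Theorem 4.9 (identification of the generator) of \cite{DawsonLi2}, and the sentence immediately following the statement says exactly that. Your blind sketch therefore cannot be compared against an argument in the paper, but it is a reasonable outline of how Dawson and Li actually argue: their uniqueness proof is a Yamada--Watanabe-type estimate for jump SDEs in the spirit of Fu--Li \cite{FuLi}, and their identification of the law does go through It\^o's formula for functionals of $\mathbf Y_t$, together with the observation that a jump $(s,y,u)$ of $\mathcal M$ replaces $\mathbf Y_{s-}$ by $(1-y)\mathbf Y_{s-}+y\,\delta_{a(u)}$, where $a(u)$ is the generalized inverse you describe.

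A few places where your sketch glosses over genuine content. First, since $\int_0^1 y\, y^{-2}\Lambda(dy)=\infty$ for $\alpha\in\,]1,2[$, the stochastic integral against the \emph{non-compensated} $\mathcal M$ is not absolutely convergent; it is well-defined only because the integrand is centred in $u$, so that compensating changes nothing. This has to be stated before any It\^o calculus, since it is the reason the non-compensated formulation is legitimate at all. Second, your three ``pieces of structure'' are individually true but not yet what closes the Gronwall loop: the key fact is that, after integrating the It\^o increment $\psi_n(D_{s-}+\Delta)-\psi_n(D_{s-})$ over $u\in[0,1]$, the resulting function of $y$ vanishes to second order at $y=0$, so the singular part of $y^{-2}\Lambda(dy)$ is paired with an $O(y^2)$ integrand; the cancellation, not the separate facts, is the heart of the Fu--Li estimate, and you should exhibit it. Third, the claim that monotonicity in $v$, the boundary values, and c\`adl\`agness in $v$ are ``preserved'' is doing a lot of hidden work; proving that $v\mapsto Y_t(v)$ is a.s.\ nondecreasing and c\`adl\`ag simultaneously for all $t$ is the stochastic-flow part of Dawson--Li's theory (going back to Bertoin and Le Gall) and is not an automatic consequence of pointwise-in-$v$ well-posedness. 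Finally, matching the generator only pins down the law of $\mathbf Y$ if the martingale problem for \eqref{E:GFVmutation generator} is well-posed, which requires a separate duality argument with the $\Lambda$-coalescent. None of these invalidates your plan, but as written it is a roadmap rather than a proof, which is consistent with the paper's own choice to cite the result.
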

Existence and uniqueness of solutions for this  equation was proved in Theorem 4.4 of \cite{DawsonLi2} while the characterization of the generator of the measure-valued process $\bY$ is the content of their Theorem 4.9. 
\smallskip

We next extend a classical relation between Fleming-Viot processes and measure-valued branching processes which is typically known as disintegration formula. Without mutations, for the standard Fleming-Viot process this goes back to Konno and Shiga \cite{KonnoShiga} and it was shown in Birkner et al. \cite{7} that the relation extends to the generalized $\Lambda$-Fleming-Viot processes without immigration 
if and only if $\Lambda$ is a Beta-measure.
 Our extension relates $(\alpha,\theta)$--Fleming-Viot processes to \eqref{(2.7)} with immigration mechanism $g(x)=\theta x^{2-\alpha}$ and for $\theta=0$ gives an SDE formulation of the main result of \cite{7}.

\begin{theorem}\label{2.7} Let $F(v)=I(v)=v$ and let $g:\R_+\mapsto\R_+$ be defined by $g(x)=\theta x^{2-\alpha}$ for some $\alpha \in(1,2).$ Let then $(\mathbf X_t)_{t\geq 0}$ be the unique solution of  to \eqref{(2.7)} (in the sense of Definition \ref{def:2.1}) such that 
\[
X_t(1)=0, \quad \forall \ t\geq T_0:=\inf\{ s>0: \ X_s(1)=0\}.
\]

Define
\begin{align*}
	S(t) =\alpha(\alpha-1)\Gamma(\alpha) \int_0^tX_s(1)^{1-\alpha}\,ds
\end{align*}
and 
	\begin{equation}\label{E: from Y to X via SDE}
		\textbf Y_t(dv)= \frac{\textbf X_{S^{-1}(t)}(dv)}{X_{S^{-1}(t)}(1)},\quad t\geq 0.
	\end{equation}
	Then $\big(\textbf Y_t\big)_{t\geq 0}$ is well-defined, i.e. $S^{-1}(t)<T_0$ for all $t\geq 0$, and is an $(\alpha,\theta)$-Fleming-Viot  process, i.e. a strong solution to \eqref{FVI} with $\Lambda=Beta(2-\alpha,\alpha)$. 
\end{theorem}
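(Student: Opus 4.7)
The plan is to show that the time-changed ratio $\mathbf Y_t$ defined in \eqref{E: from Y to X via SDE} satisfies an SDE of the form \eqref{FVI} driven by a PPP with the correct intensity, so that the strong uniqueness part of Theorem~\ref{T:DL} forces $\mathbf Y$ to be the $(\alpha,\theta)$-Fleming-Viot process. The argument splits into three steps.

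\textbf{Step 1: the time change is well-defined.} I first argue that $S(s)\uparrow +\infty$ as $s\uparrow T_0$, so that $S^{-1}(t)<T_0$ for every $t\geq 0$. Using the Pitman-Yor type representation of $\mathbf X$ from Theorem~\ref{2.4}, the total mass $X_s(1)$ near $T_0$ is dominated by the single $\bbQ$-excursion that carries the process to extinction. By self-similarity of the $\alpha$-stable CSBP, the integral $\int_0^{\zeta(w)} w_u^{1-\alpha}\,du$ does not depend in distribution on the starting point of the excursion and is $\bbQ$-a.s.\ infinite, because near extinction $w_u\sim c\,(\zeta(w)-u)^{1/(\alpha-1)}$ produces the critical divergence $\int(\zeta(w)-u)^{-1}\,du=+\infty$. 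Meanwhile the contribution $\int_0^{T_0}\theta\,X_s(1)^{2-\alpha}\,ds$ stays finite since the exponent $(2-\alpha)/(\alpha-1)$ is strictly positive, so immigration cannot prevent the divergence.

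\textbf{Step 2: It\^o's formula for $Y_t(v):=X_t(v)/X_t(1)$.} A direct computation at an atom $(r,x,y)$ of $\cN$ with $y<X_{r-}(1)$ gives
\[
Y_r(v)-Y_{r-}(v) = \frac{x}{X_{r-}(1)+x}\bigl(\un{(y<X_{r-}(v))}-Y_{r-}(v)\bigr),
\]
while atoms with $y\geq X_{r-}(1)$ affect neither $X_r(v)$ nor $X_r(1)$. For the continuous part, the quotient rule applied to the immigration drifts $v\theta X_s(1)^{2-\alpha}\,ds$ and $\theta X_s(1)^{2-\alpha}\,ds$ yields
\[
dY_s(v)\big|_{\mathrm{cts}} = \theta\,X_s(1)^{1-\alpha}\bigl(v-Y_s(v)\bigr)\,ds.
\]
The change of variables $\eta := x/(X_{r-}(1)+x)\in(0,1)$ and $u := y/X_{r-}(1)\in(0,1)$ puts the jump in the canonical Fleming-Viot form $\eta\bigl(\un{(u<Y_{r-}(v))}-Y_{r-}(v)\bigr)$ and transforms the intensity of the jumps contributing to $\mathbf Y$ into
\[
c_\alpha\,X_{r-}(1)^{1-\alpha}\,\eta^{-1-\alpha}(1-\eta)^{\alpha-1}\,d\eta\otimes du.
\]

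\textbf{Step 3: time change and identification.} Since $S'(s)=\alpha(\alpha-1)\Gamma(\alpha)\,X_s(1)^{1-\alpha}$, the factor $X_{r-}(1)^{1-\alpha}$ cancels in both the jump intensity and the continuous drift after the change of time variable $t=S(s)$. Using $c_\alpha=\alpha(\alpha-1)/\Gamma(2-\alpha)$ one has
\[
\frac{c_\alpha}{\alpha(\alpha-1)\Gamma(\alpha)}\,\eta^{-1-\alpha}(1-\eta)^{\alpha-1}\,d\eta = C_\alpha\,\eta^{-1-\alpha}(1-\eta)^{\alpha-1}\,d\eta = \eta^{-2}\,\Lambda(d\eta),
\]
so the image under $s\mapsto S(s)$ of the jump measure is a PPP on $(0,\infty)\times[0,1]\times[0,1]$ with intensity $dt\otimes\eta^{-2}\Lambda(d\eta)\otimes du$, matching the noise $\cM$ in \eqref{FVI}, and the drift becomes (a constant multiple of) $(v-Y_t(v))\,dt$. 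The resulting equation for $\mathbf Y$ matches \eqref{FVI}, so by the strong uniqueness of Theorem~\ref{T:DL}, $(\mathbf Y_t)_{t\geq 0}$ is the $(\alpha,\theta)$-Fleming-Viot process started from the uniform measure.

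The main obstacle is Step~1: without a quantitative lower bound on how fast $X_s(1)$ approaches zero as $s\uparrow T_0$, the ratio in \eqref{E: from Y to X via SDE} is undefined. This control follows cleanly from the Pitman-Yor excursion decomposition of Theorem~\ref{2.4} combined with the scaling property of $\alpha$-stable CSBPs. Steps~2 and~3 are careful but routine: one must track the compensators under the change of variable $(x,y)\mapsto(\eta,u)$ and verify that the predictable intensity cancels exactly with $S'$ to produce a non-compensated PPP with the prescribed Beta intensity.
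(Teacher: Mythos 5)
Your Steps 2 and 3 reproduce the paper's computation essentially unchanged: the jump formula for $Y_r(v)-Y_{r-}(v)$, the change of variables $(x,y)\mapsto(\eta,u)$ producing intensity $c_\alpha X_{r-}(1)^{1-\alpha}\eta^{-1-\alpha}(1-\eta)^{\alpha-1}\,d\eta\,du$, and the exact cancellation of $X_{r-}(1)^{1-\alpha}$ against $S'(s)$ with the constants matching $C_\alpha$ — all check out and are the same route the paper takes (the paper is somewhat more explicit about separating and then cancelling the compensator of the jump measure against part of the drift, a point you compress into ``the quotient rule,'' but the content is the same).

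Step 1, however, is where your argument and the paper's diverge, and it is where your proposal has a genuine gap. You attempt to prove $\int_0^{T_0} X_s(1)^{1-\alpha}\,ds=\infty$ by reducing to the last surviving $\bbQ$-excursion $w$ and invoking a pathwise asymptotic $w_u\sim c(\zeta(w)-u)^{1/(\alpha-1)}$ near extinction. This asymptotic is not established, and even if the leading power is correct the resulting integrand is $(\zeta-u)^{-1}$ up to corrections — the exactly borderline case. One should expect logarithmic (law-of-iterated-logarithm type) fluctuations in the approach of a CSBP excursion to zero, and a correction of the form $w_u\asymp(\zeta-u)^{1/(\alpha-1)}L(\zeta-u)$ with $L$ slowly varying would make $\int w_u^{1-\alpha}\,du$ converge or diverge depending on $L$. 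So the divergence does not follow from the stated leading-order behaviour; the argument is fragile precisely where it must be sharp. (There is also a minor loose end: you assert self-similarity of the ``integral of a $\bbQ$-excursion,'' but $\bbQ$ is a $\sigma$-finite excursion measure with entrance law $K_t$, not a family of laws indexed by a starting point, so the scaling statement needs to be reformulated before it even makes sense.)

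The paper sidesteps all pathwise asymptotics. The key observation — crediting \cite{BDMZ} — is that the total mass $(X_t(1))$, immigration included, is a positive self-similar Markov process of index $1/(\alpha-1)$, because $g(x)=\theta x^{2-\alpha}$ is exactly the power that preserves self-similarity. Writing $X_t(1)=\exp(\xi_{A^{-1}(t)})$ via Lamperti's representation, with $A(t)=\int_0^t e^{(\alpha-1)\xi_s}\,ds$, one computes
\[
\bar S(t)=\int_0^t X_s(1)^{1-\alpha}\,ds
=\int_0^t e^{(1-\alpha)\xi_{A^{-1}(s)}}\,ds
=\int_0^{A^{-1}(t)} e^{(1-\alpha)\xi_u}\,e^{(\alpha-1)\xi_u}\,du
= A^{-1}(t),
\]
so $\bar S$ is literally the Lamperti clock. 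Since the Lévy process $\xi$ has infinite lifetime and does not jump to $-\infty$, the only way $X_t(1)=e^{\xi_{A^{-1}(t)}}\to 0$ as $t\uparrow T_0$ is $A^{-1}(t)\to\infty$, i.e.\ $\bar S(T_0)=\infty$. This gives the well-definedness of $S^{-1}$ with no need to control the path near extinction, and it is robust to exactly the fluctuations that break your Step~1. If you want to repair your version, you would need a quantitative two-sided comparison of $X(1)$ with the surviving excursion together with a sharp (not merely first-order) control of the CSBP near its extinction time — considerably harder than the cancellation the paper exploits.

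Two smaller omissions worth noting: the theorem requires a solution to \eqref{(2.7)} with $g(x)=\theta x^{2-\alpha}$, which is not Lipschitz at $0$, so existence/uniqueness does not follow directly from Theorem~\ref{2.2}; the paper constructs the solution by localisation with $g^n(x)=\theta(x\vee n^{-1})^{2-\alpha}$ and you should say a word about this. And in Step~3, after the change of variable one must verify that the image point process on $(0,\infty)\times(0,1)\times(0,1)$ really is a Poisson point process with the stated deterministic intensity (not merely a point process with that compensator); the paper does this by a predictable-projection argument.
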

The proof of the theorem is different from the known result for $\theta=0$. To prove that $X_{S^{-1}(t)}(1)>0$ for all $t\geq 0$, Lamperti's representation for CSBPs was crucially used in \cite{7}. This idea breaks down in our generalized setting since the total-mass process $X_t(1)$ is not a CSBP. Our proof uses instead the fact that for all $\theta\geq 0$ the total-mass process is self-similar and an interesting cancellation effect of Lamperti's transformation for self-similar Markov processes and the time-change $S$. \\

In \cite{BDMZ} we study (a generalized version of) the total mass process $(X_t(1), t\ge 0)$ and we show that the extinction time $T_0 = \inf \{ t\ge 0 : X_t(1)=0\}$ is finite almost surely if and only if $\theta <\Gamma(\alpha)$. Otherwise $T_0=\infty$ almost surely. We will see in the proof of Theorem \eqref{2.7} that in both cases $$\lim_{t\to \infty} S^{-1}(t) =T_0 \, \text{ a.s.}$$ 
Theorem \ref{2.7} thus gives some partial information on the behavior of $\big(\textbf X_t\big)_{t\geq 0}$ near the extinction time $T_0$:
\begin{cor}
\label{cor}
As $t \to \infty$ the probability-valued process $\left({\mathbf{X}_{S^{-1}(t)}(dv) \over X_{S^{-1}(t)}(1)}\right)_{t\ge 0}$ converges weakly to the unique invariant measure of $(\mathbf{Y}_t, t\ge 0)$. 

As $t\to T_0$, almost surely, there exists a (random) sequence of times $t_1 <t_2 <\ldots <T_0$ tending to $T_0$ such that the sets
$$
A_i = \text{support of } \mathbf{X}_{t_i}
$$
are pairwise disjoints.
\end{cor}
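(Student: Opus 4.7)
The two claims are handled separately. For the convergence, Theorem~\ref{2.7} identifies $t\mapsto\mathbf{Y}_t=\mathbf{X}_{S^{-1}(t)}/X_{S^{-1}(t)}(1)$ with the $(\alpha,\theta)$-Fleming-Viot process started from the uniform distribution on $[0,1]$. Since $\theta>0$ and the neutral mutation kernel \eqref{E:IAM mutation operator} is ergodic with Lebesgue measure as its unique invariant law, a standard duality/coupling argument via the mutation-augmented $\Lambda$-coalescent (in the spirit of Ethier-Kurtz~\cite{EthierKurtz93}) shows that the semigroup associated to \eqref{E:GFVmutation generator} admits a unique invariant probability $\pi\in\cM^1_{[0,1]}$, with $\mathbf{Y}_t\Rightarrow\pi$ as $t\to\infty$; this is the first assertion.

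For the disjoint-support statement, I use the Pitman-Yor representation of Theorem~\ref{2.4},
\[
\mathbf{X}_t=\sum_j w^j_{t-s_j}\,\delta_{a_j}\,\un{(j\text{ alive at }t)},\qquad j\text{ alive at }t\iff s_j\le t<s_j+\zeta(w^j).
\]
Because $F(da)=I(da)=da$ is diffuse and the types $a_j$ are sampled independently of the shapes $w^j$, distinct excursions almost surely carry distinct types, so the support of $\mathbf{X}_t$ is $A_t=\{a_j:j\text{ alive at }t\}$ and each excursion contributes on a single connected lifetime interval $[s_j,s_j+\zeta(w^j))$. It therefore suffices to construct $t_1<t_2<\cdots<T_0$ with $t_i\uparrow T_0$ such that no excursion alive at $t_i$ is still alive at $t_{i+1}$; the single-step condition then propagates to give $A_i\cap A_k=\emptyset$ for every $k>i$.

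For the inductive step, set $d_{\max}(t_i):=\sup\{s_j+\zeta(w^j):j\in J(t_i)\}$ with $J(t_i)$ the countable set of excursions alive at $t_i$. For any $\eps>0$, $J^\eps(t_i):=\{j\in J(t_i):\zeta(w^j)\ge\eps\}$ is almost surely finite because the intensity \eqref{Gamma} restricted to $\{s\le t_i,\,u\le C_{t_i},\,\zeta\ge\eps\}$ has finite total mass (with $C_{t_i}:=\sup_{s\le t_i}g(X_{s-}(1))<\infty$ a.s.\ and $\bbQ(\zeta\ge\eps)=((\alpha-1)\eps)^{-1/(\alpha-1)}<\infty$), while every $j\in J(t_i)\setminus J^\eps(t_i)$ satisfies $s_j+\zeta(w^j)<t_i+\eps$. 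Granted the key claim $d_{\max}(t_i)<T_0$ a.s., the choice $t_{i+1}\in(\max(d_{\max}(t_i),T_0-1/i),T_0)$ when $T_0<\infty$, or $t_{i+1}>\max(d_{\max}(t_i),i)$ when $T_0=\infty$, closes the induction and forces $t_i\uparrow T_0$.

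The main obstacle is the strict inequality $d_{\max}(t_i)<T_0$. For $T_0=\infty$ it is automatic. For $T_0<\infty$, any accumulation $d_{j_n}\uparrow T_0$ along distinct $j_n\in J(t_i)$ would eventually force $\zeta(w^{j_n})\ge(T_0-t_i)/2$, contradicting the finiteness of $J^{(T_0-t_i)/2}(t_i)$. The residual and most delicate scenario is the possibility that a single $j^\ast\in J(t_i)$ satisfies $d_{j^\ast}=T_0$ exactly; excluding it requires combining the diffuseness of the lifetime distribution $\bbQ(\zeta\in\cdot)$ with a careful analysis of the Poisson structure of the immigration marks near the random extinction time, and constitutes the main technical step of the argument.
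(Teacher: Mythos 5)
Your treatment of the first assertion is essentially the same as the paper's (both reduce to the known ergodicity of the $(\alpha,\theta)$-Fleming-Viot process; you invoke a coalescent duality, the paper sketches the argument via the lookdown construction and cites the literature for uniqueness of the invariant law). Either route is fine.

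Your second part, however, takes a genuinely different road and contains a real gap, which you yourself flag. You work directly with the Pitman--Yor excursion representation of $\mathbf{X}$ and try to find times $t_1<t_2<\cdots\uparrow T_0$ such that every excursion alive at $t_i$ has died by $t_{i+1}$. The reduction to ``$d_{\max}(t_i)<T_0$ a.s.'' and the ruling out of an accumulation $d_{j_n}\uparrow T_0$ via finiteness of $J^{\eps}(t_i)$ are correct. But the residual scenario --- a single excursion $j^\ast$ with $s_{j^\ast}\le t_i$ and $s_{j^\ast}+\zeta(w^{j^\ast})=T_0$ exactly --- is not a cosmetic loose end. Here $T_0$ and the death time $d_{j^\ast}$ are strongly coupled: $T_0=\sup_j d_j$, the immigration rate $g(X_s(1))$ depends on the total mass which contains $w^{j^\ast}$, and in the subcritical regime $\theta<\Gamma(\alpha)$ the total mass does hit zero in finite time continuously. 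A bare appeal to diffuseness of $\bbQ(\zeta\in\cdot)$ does not decouple these quantities, and the ``careful Poisson analysis near the random extinction time'' you defer is precisely the crux. (Minor: for $T_0=\infty$ the claim $d_{\max}(t_i)<\infty$ is not automatic either --- it requires a Borel--Cantelli estimate using $\bbQ(\zeta>h)=((\alpha-1)h)^{1/(1-\alpha)}$ and the local boundedness of $g(X_\cdot(1))$, though this is straightforward.)

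The paper avoids this difficulty entirely by arguing on the $\mathbf{Y}$-side with the lookdown construction: at any time $t$, each type present occupies finitely many low levels, and after the next mutation event on level 1 that type is pushed up and reaches level $\infty$ in a finite time stochastically dominated by a fixation time in the mutation-free Beta-Fleming-Viot model. Since mutation produces fresh types (a.s.\ never repeated), after a random but a.s.\ finite delay the entire type set at time $t$ has been evacuated, and one can choose $t_1<t_2<\cdots$ accordingly, then transfer back to $\mathbf{X}$-time via $S^{-1}$. This is cleaner because it works with the genealogy/types rather than with excursion death times, and so never needs to resolve the delicate behaviour of excursion lifetimes at the random extinction time $T_0$. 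To complete your version you would need to supply the missing step; as written, the proof of the second assertion is incomplete.
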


The first part is a direct consequence of Theorem \ref{2.7} and of the convergence of the $(\alpha,\theta)$- Fleming-Viot process $(\mathbf Y_t, t\ge0)$ to its unique invariant measure. The second part is  a straightforward application of the so-called lookdown representation of $(\mathbf Y_t, t\ge0)$. A sketch of the proof is given in Section \ref{sec:7}.

\section{Proof of Theorems \ref{2.2} and \ref{2.4}}

Recall that $(s_i, u_i,a_i,w^i)_{i\in I}$ is a Poisson point process on $\R_+^3 \times\cD$ with intensity measure $\Gamma$ given as in \eqref{Gamma}, and that we use the notation \eqref{gamma}.
We are going to show that for all $v\in[0,1]$ there exists a unique c\`adl\`ag process $(Z_t(v), t\geq 0)$ solving 
\begin{equation}\label{ZZ}
\begin{cases}
Z_t(v)= \sum_{s_i=0} w_{t}^i \, \un{(a_i \le v)}  + \sum_{s_i>0} w_{t}^i \, \un{(a_i \le v)} \un{(u_i \le g(Z_{s_i-}(1)))},\;t>0,\\
Z_0=F(v).
\end{cases}
\end{equation}
Then we are going to construct a PPP $\cN$ with intensity $dr\otimes c_\alpha\, \un{(x>0)} \, x^{-1-\alpha}\, dx\otimes dy$ such that, for all $v\in[0,1]$, $Z$ is solution of \eqref{(2.7)} .
Observe that 
$$
Z^0_t(v) := \sum_{s_i=0} w_{t}^i \, \un{(a_i \le v)}  
$$
is well defined when $t>0$ and tends to $F(v)$ as $t\downarrow 0$. Therefore, for each $v$ the process $(Z^0_t(v), t\ge 0)$ is a CSBP started from $F(v)$. If $F(v)=v$ for all $v\in[0,1]$, then $(Z^0_t(v), t\ge 0, v\in[0,1])$ is the whole flow of CSBP.

\subsection{The Pitman-Yor Type Representation with Predictable Random Immigration}

We start by replacing the immigration rate $(g(Z_{s-}(1)))_{s>0}$ in the right-hand side of \eqref{ZZ} with a generic $(\cF_t)$-predictable process $(V_s)_{s\geq 0}$, that we assume to satisfy 
\begin{equation}\label{E:hyp sur V}
V_t\geq 0 \text{ and } \int_0^t \E(V_s) \, ds<+\infty \ \ \forall t\geq 0;
\end{equation} 
this will be useful when we perform a Picard iteration in the proof of existence of solutions to \eqref{(2.7)} and \eqref{ZZ}. Then we consider
\begin{equation}
\label{eq:ex5_1}
\begin{cases}
\ZV_t(v) := \sum_{s_i=0} w_{t}^i \, \un{(a_i \le v)} +
\sum_{s_i>0} w_{t}^i \, \un{(a_i \le v)}\un{(u_i \le V_{s_i})},\qquad t>0, \ v\in[0,1],\\
\ZV_0(v):=F(v),\qquad \ v\in[0,1].
\end{cases}
\end{equation}

Then we want to show that there is a noise $\mathcal N$ on $(\Omega, \mathcal G, \mathcal G_t, \P)$ such that $\ZV$ is a solution of an equation of the type \eqref{(2.7)}.

\subsubsection{Definition of $\cN$}

\begin{figure}
\begin{center}
\input{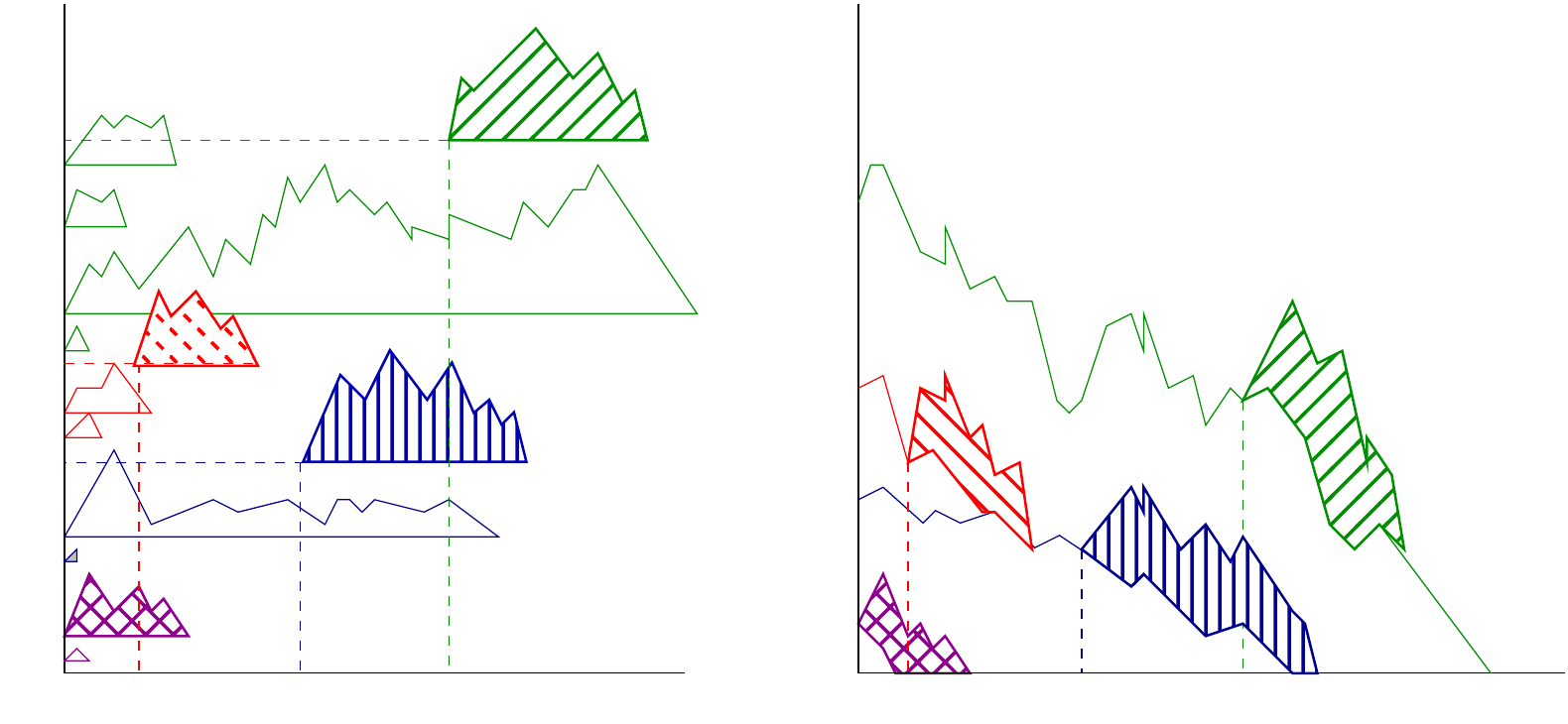_t}
\caption{ Definition of $\mathcal N$. On the left-hand side we represent the  point process $(s_i,w^i, a_i)$. Observe that $s_4=0$ while $s_1,s_2,s_3>0$. On the right-hand side we show how the $w^i$ are combined to construct the noise $\cN$. The line $L^3_t$ for instance is the sum of all the excursions $w^i$ with $a_i \le a_3$. The excursion $w^3$ is then added on top of this line.
}
\label{F:RKconstr}
\end{center}
\end{figure}
Let us consider a family of independent random variables $(U_r)_{r\geq 0}$ such that $U_r$ is uniform
on $[0,1]$ for all $r\geq 0$. We also assume that  $(U_r)_{r\geq 0}$ are 
independent of the PPP $(s_i, u_i,a_i,w^i)$.
Then, for all atoms $(s_i, u_i,a_i,w^i)$ in the above PPP, we define the following point process $\cN^i:=(r_j^i,x_j^i,y_j^i)_{j\in J^i}$: 
\begin{enumerate}
\item $(r_j^i)_{j\in J^i}$ is the family of jump times of $r\mapsto w^i_r$; 
\item for each $r_j^i$ we set
\begin{equation}\label{xy}
x_j^i := w^i_{r_j^i}-w^i_{r_j^i-}, \qquad
y_j^i := w^i_{r_j^i-} \, \cdot U_{r_j^i}.
\end{equation}
\end{enumerate}
Almost surely the sequences $\{ (r_j^i)_j \}_{j\in J^i}$ are disjoint, i.e. $\{r_j^{i_1}\}_{j\in J^{i_1}}\cap \{r_j^{i_2}\}_{j\in J^{i_2}}=\emptyset$ if $i_1\ne i_2$,
 therefore each $\cN^i$ uses a separate family of $(U_r)_{r\in(r_j^i)_{j\in J^i}}$.
We note that $ \cN^i$ is not expected to be a Poisson point process. 
Almost surely we have $a_i\ne a_j$ for all $i\ne j$.  For each $k \in \N$  we set
\begin{equation}\label{L}
\begin{array}{l}  L^k_0 := F(a_k) \text{ and }
L^k_t:=\sum_{a_i< a_k , u_i\le V_{s_i}} w^i_{t-}, \qquad t> 0, \\
 L^\infty_t := \sup_k L^k_t , \qquad t\ge 0. \end{array}
\end{equation}
We consider a PPP $\cN^\circ=(r^\circ_j,x^\circ_j,y^\circ_j)_j$ with intensity measure $\nu$ given by \eqref{nu} and independent of $( (s_i, u_i, a_i, w^i)_i, (U_r)_{r\geq 0}, (V_t)_{t\geq 0}$.
We set for any non-negative measurable $f=f(r,x,y)$ 
\begin{equation}\label{cN}
\begin{split}
 \int f \, d\cN :=  & \sum_{k}  \un{(u_k \le V_{s_k})} \int f(r,x,y+L^k_{r}) \, 
\cN^k(dr, dx,dy) \\ & + \int f(r,x,y+L^\infty_{r}) \, \cN^\circ(dr, dx,dy).
\end{split}
\end{equation}
The filtration we are going to work with is
\[
\cF_t:=\sigma\big((s_i, u_i, a_i, w^i_r, U_r, V_r), (r^\circ_i,x^\circ_i,y^\circ_i)_i : r\leq t, s_i\leq t, r^\circ_i\leq t), \qquad t\geq 0.
\] 
We are going to prove the following
\begin{prop}\label{N}
$\cN$ is a PPP with intensity $\nu(dr , dx , dy) = dr\otimes c_\alpha\, x^{-1-\alpha}\, dx\otimes dy$.
\end{prop}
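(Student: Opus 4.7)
The plan is to compute the $(\cF_t)$-predictable compensator of $\cN$ and verify that it equals the deterministic measure $\nu$. By Watanabe's characterization of Poisson random measures (see, e.g., Jacod--Shiryaev, Theorem II.4.8), this will immediately imply that $\cN$ is a PPP with intensity $\nu$. The computation decomposes along the two summands of~\eqref{cN}.

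First, for a fixed atom $i$, the excursion $w^i$ is an $\alpha$-stable CSBP excursion under $\bbQ_{s_i}$. By the Markov property built into~\eqref{bbQ}, conditionally on $\cF_{s_i+\epsilon}$ the shifted trajectory $w^i_{\cdot+s_i+\epsilon}$ is a genuine CSBP started from $w^i_{s_i+\epsilon}$. Reading off the jump structure from the SDE~\eqref{4b} (or equivalently from~\eqref{4} by noting that a jump of the driving stable Lévy process at size $z$ induces a jump of $X$ of size $X^{1/\alpha}_{r-}z$), such a CSBP has jump intensity $X_{r-}c_\alpha x^{-1-\alpha}\,dr\,dx$; letting $\epsilon\downarrow 0$ this yields jump compensator $w^i_{r-}c_\alpha x^{-1-\alpha}\,dr\,dx$ for the whole excursion. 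Adjoining the independent uniform marks via $y_j^i = U_{r_j^i}w^i_{r_j^i-}$ spreads this uniformly over $[0,w^i_{r-}]$, so $\cN^i$ has predictable compensator
\begin{equation*}
\un{(y<w^i_{r-})}\,c_\alpha x^{-1-\alpha}\,dr\,dx\,dy.
\end{equation*}
Since $L^k_r$ is $\cF_{r-}$-measurable, shifting the $y$-coordinate by $L^k_r$ transports this to $\un{(L^k_r<y<L^k_r+w^k_{r-})}\,c_\alpha x^{-1-\alpha}\,dr\,dx\,dy$.

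The crucial geometric observation now is that, ordering the accepted indices ($u_k\le V_{s_k}$) by the values of $a_k$, the bands $[L^k_r,L^k_r+w^k_{r-}]$ are pairwise disjoint and their union is exactly $[0,L^\infty_r)$. Consequently $\sum_{k:\,u_k\le V_{s_k}}\un{(L^k_r<y<L^k_r+w^k_{r-})} = \un{(y<L^\infty_r)}$, so the first summand of~\eqref{cN} has compensator $\un{(y<L^\infty_r)}\,c_\alpha x^{-1-\alpha}\,dr\,dx\,dy$. Since $\cN^\circ$ is an independent PPP of intensity $\nu$ and $L^\infty_r$ is $\cF_{r-}$-predictable, shifting its $y$-coordinate by $L^\infty_r$ produces a compensator $\un{(y>L^\infty_r)}\,c_\alpha x^{-1-\alpha}\,dr\,dx\,dy$. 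Summing, the random indicators combine to $1$ and the total $(\cF_t)$-compensator of $\cN$ collapses to the deterministic $\nu$; Watanabe's theorem concludes.

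The main technical obstacle lies in justifying the compensator formula for $\cN^i$ uniformly across atoms of a PPP governed by the $\sigma$-finite measure $\bbQ$: one must add up compensators over the infinitely many excursions in a controlled way. This can be handled by first restricting to excursions whose lifetime satisfies $\zeta(w^i)\ge\delta$ (a set of finite $\bbQ$-mass on which~\eqref{bbQ} gives an honest probability kernel), deriving the compensator on this restricted family, and then letting $\delta\downarrow 0$; non-negativity of the integrands together with monotone convergence ensures the limiting compensator is identified correctly. A secondary point is to verify predictability of $L^k_r$ and $L^\infty_r$ with respect to $\cF_t$, which follows directly from the definition of $\cF_t$ and the left-continuity in $r$ built into $w^i_{r-}$.
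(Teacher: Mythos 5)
Your proposal is correct and takes essentially the same route as the paper's proof. The paper computes $\E(I(T)-I(t)\mid\cF_t)$ directly using the Markov property of $\bbQ$ and the jump compensator of the $\alpha$-stable CSBP (with an $\epsilon$-truncation $\un{(s_k+\epsilon<r)}$ to start from positive mass, matching your conditioning on $\cF_{s_i+\epsilon}$), then notes that the intervals $[L^k_r,L^k_r+w^k_{r-})$ partition $[0,L^\infty_r)$ and that $\cN^\circ$ shifted by $L^\infty_r$ fills $[L^\infty_r,\infty)$, concluding via the deterministic-compensator characterization of Poisson point processes — precisely your chain of reasoning, up to citing Ikeda--Watanabe rather than Jacod--Shiryaev.
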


\begin{proof}
For $f=f(r,x,y)\geq 0$ we now set
\[
I(t):= \sum_k \un{(u_k \le V_{s_k})} \int_{]0,t]\times\R_+\times\R_+} f(r,x,y+L^k_{r}) \, \cN^k(dr, dx,dy).
\]
Since $w_t^i=0$ if $s_i\geq t$, $V$ is predictable and we can write 
\[
L^k_t:=\sum_{s_i=0} \un{(a_i<a_k)} \, w^i_{t-} + \sum_{s_i>0 , u_i\le V_{s_i}} \un{(a_i<a_k )}\, w^i_{t-},
\]
then we obtain that $(L^k_{\cdot})_k$ is predictable. Hence, $I(t)$ is $\cF_t$-measurable and for $0\leq t<T$
\[
\begin{split}
& \E\left( \left. I(T)-I(t) \,\right| \, \cF_t \right) = \E\left( \left. \sum_k \un{(u_k \le V_{s_k})} \int_{]t,T]\times\R_+\times\R_+} f(r,x,y+L^k_{r}) \, \cN^k(dr, dx,dy) \,\right| \, \cF_t \right)
\\  &=  \E\left( \left. \sum_k \un{(u_k \le V_{s_k})} \un{(s_k \le t)} \int_{]t,T]\times\R_+\times\R_+} f(r,x,y+L^k_{r}) \, \cN^k(dr, dx,dy) \,\right| \, \cF_t \right) \\ & \qquad+
 \E\left( \left. \sum_k \un{(u_k \le V_{s_k})}\un{(s_k > t)} \int_{]t,T]\times\R_+\times\R_+} f(r,x,y+L^k_{r}) \, \cN^k(dr, dx,dy) \,\right| \, \cF_t \right)
\end{split}
\]

We will need the following two facts: 
\begin{enumerate}
\item Conditionally on $w^k_t$ and $s_k \le t$  the process $w^k_{\cdot +t}$ has law $P_{w^k_t}$ (this follows for instance from \eqref{bbQ}). 
\item Let $(w_t, t\ge0)$ be a CSBP started from $w_0$ with law $\P_{w_0}$. Let $\cM =(r_i,x_i, y_i)$ be a point process which is defined from $w$ and a sequence of i.i.d. uniform variables on $[0,1]$ as $\cN^k$ is constructed from $w^k$ and the $U_{r^i_j}.$  
Then
for any positive function $f$ 
$$
\E\left[\int_{[0,T] \times \R_+\times \R_+} f(r,x,y) \cM(dr,dx,dy) \right] = \E_{w_0} \left[\int_{[0,T] \times \R_+\times \R_+} f(r,x,y) \mathbf{1}_{y\le w_{r-}} \nu(dr,dx,dy) \right].
$$ 
\end{enumerate}

Let us start with the case $s_k \le t$. Using the above facts we see that
\begin{align*}
&\E\left( \left. \sum_k \un{(u_k \le V_{s_k})}\un{(s_k \le t)} \int_{]t,T]\times\R_+\times\R_+} f(r,x,y+L^k_{r}) \, \cN^k(dr, dx,dy) \,\right| \, \cF_t \right)
\\ &= \E\left( \left. \sum_k \un{(u_k \le V_{s_k})}\un{(s_k \le t)}  \E \left[ \left. \int_{]t,T]\times\R_+\times\R_+} f(r,x,y+L^k_{r}) \, \cN^k(dr, dx,dy)\,\right| w^k_t, L^k_{.} \right]   \, \right| \, \cF_t \right)
\\ &= \E\left( \left. \sum_k \un{(u_k \le V_{s_k})}\un{(s_k \le t)}  \int_{]t,T]\times\R_+\times\R_+} \un{[L^k_{r},L^k_{r}+w^k_{r-} [}(y) \, f(r,x,y) \, dr \, \frac{c_\alpha}{x^{1+\alpha}}\,dx\, dy\,\right| \, \cF_t \right)  
\end{align*}

Let us now consider the case $s_k>t.$ 
\begin{align*}
& \E\left( \left. \sum_k \un{(u_k \le V_{s_k})}\un{(s_k > t)} \int_{]t,T]\times\R_+\times\R_+} f(r,x,y+L^k_{r}) \, \cN^k(dr, dx,dy) \,\right| \, \cF_t \right)
\\ &= \lim_{\epsilon \to 0} \E\left( \left. \sum_k \un{(u_k \le V_{s_k})}\un{(s_k > t)} \int_{]t,T]\times\R_+\times\R_+} f(r,x,y+L^k_{r})  \un{(s_k+\epsilon<r_-)} \, \cN^k(dr, dx,dy) \,\right| \, \cF_t \right)
\\ &= \lim_{\epsilon \to 0} \E\left( \left. \sum_k \un{(u_k \le V_{s_k})}\un{(s_k > t)}  \E\left[ \left. \int_{]t,T]\times\R_+\times\R_+} f(r,x,y+L^k_{r})  \un{(s_k+\epsilon<r_-)} \, \cN^k(dr, dx,dy) \right| w^k_{s_k+\epsilon} ,L^k_{.} \right] \,\right| \, \cF_t \right)
\\ &= \lim_{\epsilon \to 0} \E\left( \left. \sum_k \un{(u_k \le V_{s_k})}\un{(s_k > t)} \int_{]t,T]\times\R_+\times\R_+} \un{[L^k_{r},L^k_{r}+w^k_{r-} [}(y) \, f(r,x,y) \un{(s_k+\epsilon<r_-)}\, dr \, \frac{c_\alpha}{x^{1+\alpha}}\,dx\, dy\,\right| \, \cF_t \right)  
\\ &= \E\left( \left. \sum_k \un{(u_k \le V_{s_k})}\un{(s_k > t)} \int_{]t,T]\times\R_+\times\R_+} \un{[L^k_{r},L^k_{r}+w^k_{r-} [}(y) \, f(r,x,y) \, dr \, \frac{c_\alpha}{x^{1+\alpha}}\,dx\, dy\,\right| \, \cF_t \right) 
\end{align*}
where we need to introduce the indicator that $r_- >s_k+\epsilon$ to get a sum of CSBP started from a positive initial mass and thus be in a position to apply the above fact.

We conclude that 
\begin{align*}
& \E\left( \left. I(T)-I(t) \,\right| \, \cF_t \right)  = \E\left( \left. \int_{]t,T]\times\R_+\times\R_+} \un{]0,\sup_k L^k_{r} [}(y) \, f(r,x,y) \, dr \, \frac{c_\alpha}{x^{1+\alpha}}\,dx\, dy\,\right| \, \cF_t \right),
\end{align*}

Therefore by the definition \eqref{cN} of $\cN$
\[
\begin{split}
& \E\left( \left. \int_{]t,T]\times\R_+\times\R_+} f(r,x,y) \, 
\cN(dr, dx,dy) \,\right| \, \cF_t \right) 
\\ & = \E\left( \left. \int_{]t,T]\times\R_+\times\R_+} \left(\un{]0,L^\infty_{r} [}(y)+\un{]L^\infty_{r},\infty [}(y) \right) \, f(r,x,y) \, dr \, \frac{c_\alpha}{x^{1+\alpha}}\,dx\, dy\,\right| \, \cF_t \right).
\\ & = \int_{]t,T]\times\R_+\times\R_+} f(r,x,y) \, dr \, \frac{c_\alpha}{x^{1+\alpha}}\,dx\, dy.
\end{split}
\]
By \cite[Theorem II.6.2]{ikeda}, a point process with deterministic compensator is necessarily a Poisson point process, and therefore the proof is complete.
\end{proof}

Proposition \ref{N} tells us how to construct a Poisson noise $\cN$ from the $(s_i,u_i,a_i,w^i)$. Let us now show that $Z$ solves \eqref{(2.7)} with this particular noise.
\begin{prop}\label{eqv} Let  $\ZV$ satisfy \eqref{eq:ex5_1}. Then for all $v\geq 0$, $(Z(v),\cN)$ satisfies $\P$-a.s.
\[
\ZV_t(v) = F(v) + \int_{]0,t]\times\R_+\times\R_+} \un{(y< \ZV_{r-}(v))}
\, x \, \tilde \cN(dr,dx,dy) +I(v)\int_0^{t} V_{s} \, ds,\qquad t\geq 0.
\]
\end{prop}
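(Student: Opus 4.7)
The plan is to verify the identity by showing that both sides are purely discontinuous local martingales (plus a deterministic absolutely continuous drift on the left-hand side) that vanish at $t=0$ and have identical jumps; the uniqueness of the jump part of a local martingale then forces the equality.

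\emph{Step 1 (jumps of the stochastic integral).} The definition~\eqref{cN} of $\cN$ relabels each atom of $\cN^k$ from $(r,x^k_j,y^k_j)$ to $(r,x^k_j,y^k_j+L^k_r)$, and each atom of $\cN^\circ$ by the shift $L^\infty_r$. Since $y^k_j=U_{r^k_j}w^k_{r^k_j-}$ with $U$ uniform on $[0,1]$, the shifted $y$-coordinate of a $\cN^k$-atom lies in $[L^k_r,L^k_r+w^k_{r-}[$. Using the elementary inequalities
\[
L^k_r+w^k_{r-}\le \ZV_{r-}(v)\ \text{if}\ a_k\le v\ \text{and}\ k\ \text{is active},\qquad L^k_r\ge \ZV_{r-}(v)\ \text{if}\ a_k>v,
\]
together with $L^\infty_r=\ZV_{r-}(1)\ge \ZV_{r-}(v)$, the indicator $\un{(y<\ZV_{r-}(v))}$ selects exactly the atoms of those $\cN^k$ for which $a_k\le v$ and $k$ is active. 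Hence the jumps of
\[
M^v_t:=\int_{]0,t]}x\un{(y<\ZV_{r-}(v))}\,\tilde\cN(dr,dx,dy)
\]
are precisely the jumps $\Delta w^k_r=w^k_r-w^k_{r-}$ of the active excursions with $a_k\le v$.

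\emph{Step 2 (identification of the drift).} Decompose $\ZV_t(v)$ into its initial-mass contribution $\sum_{s_k=0,\,a_k\le v}w^k_t$ and its immigration contribution $\sum_{s_k>0,\,a_k\le v,\,u_k\le V_{s_k}}w^k_t$. The Pitman--Yor representation identifies the first part with an $\alpha$-stable CSBP started from $F(v)$, so after subtracting $F(v)$ it is a purely discontinuous local martingale. For the immigration part, a Campbell-type computation using $\int w_s\,\bbQ(dw)=1$ from~\eqref{Qmom}, combined with the fact that the activated immigration atoms form a Cox process of predictable intensity $V_s\,I(da)\,ds$, identifies $I(v)\int_0^tV_s\,ds$ as its predictable compensator. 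Since $\alpha$-stable CSBP excursions enter $0$ continuously ($w^k_{s_k+}=0$ a.s.), $\ZV(v)$ has no jump at any birth time $s_k$, and its jumps coincide with those of $M^v$ from Step~1. Thus
\[
\ZV_t(v)-F(v)-I(v)\int_0^tV_s\,ds\quad\text{and}\quad M^v_t
\]
are two purely discontinuous local martingales vanishing at $t=0$ with the same jumps, hence are indistinguishable.

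\emph{Main obstacle.} The delicate point is making the compensator statement for the immigration part rigorous. Each activation heuristically injects an excursion of mean mass $1$, but the entrance law $K_\epsilon$ carries infinite total mass, so the instantaneous arrival rate of new excursions is infinite. The natural way around this is to introduce a cut-off $\epsilon>0$, use the Markovian SDE
\[
w^k_t=w^k_{s_k+\epsilon}+\int_{]s_k+\epsilon,t]}x\un{(y<w^k_{r-})}\,\tilde\cN^k(dr,dx,dy)
\]
on each Markovian piece of an excursion, sum over active $k$, and let $\epsilon\downarrow 0$. The boundary sum $\sum_k w^k_{s_k+\epsilon}\un{(a_k\le v,\,k\ \text{active})}$ together with the residual compensator of $\tilde\cN$ then combines to yield the deterministic drift $I(v)\int_0^tV_s\,ds$.
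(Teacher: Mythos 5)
Your proposal is, at its core, the same proof as the paper's, just with a different final wrap-up. The paper truncates excursions below age $1/n$ to get $\ZV^n_t(v)=\sum_i w^i_t\un{(a_i\le v,\,u_i\le V_{s_i},\,s_i+1/n\le t)}$, splits this as $M^n+J^n$ (post-cutoff growth plus boundary mass), writes $M^n$ as a stochastic integral against $\tilde\cN$ over an explicit predictable set $B^v_n$, controls the errors via Lemma~\ref{martingale} and Lemma~\ref{LLL}, and finally identifies $B^v=\{(y,r):y<\ZV_{r-}(v)\}$; that final identification is exactly your Step~1 inequality chain. Your Step~2 packaging — ``two purely discontinuous local martingales vanishing at $0$ with the same jumps are indistinguishable'' — is a legitimate substitute for the paper's direct limit identity, but it does not save any work: to know that $\ZV_t(v)-F(v)-I(v)\int_0^t V_s\,ds$ is even a local martingale, one must already carry out the $\epsilon$-cutoff computation you relegate to the ``main obstacle,'' which is precisely Lemma~\ref{3.4}. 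So the two routes buy the same thing at the same price. One small thing the paper proves that your outline leaves implicit: the uniform-in-$t$ $L^1$ convergence $\ZV^n\to\ZV$ (via Lemma~\ref{martingale} and Lemma~\ref{LLL}) which yields the c\`adl\`ag regularity of $\ZV_\cdot(v)$ needed to even make sense of $\ZV_{r-}(v)$ in the integrand; your argument quietly assumes this. Also note the term ``$k$ is active'' in Step~1 should be read as $u_k\le V_{s_k}$ (so that $k$ actually contributes to $\cN$ via the indicator in \eqref{cN}); the inactive $k$ simply never produce atoms, they are not excluded by the $L^k$ bound.
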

\begin{proof} Using an idea introduced by Dawson and Li \cite{DawsonLi1}, 
we set for $n\in\N^*$
\begin{equation}\label{Y^n}
\ZV_t^{n}(v) := \sum_{a_i \le v, u_i \le V_{s_i}} w_{t}^i \, \un{(s_i+\frac 1n\leq t)}.
\end{equation}
Note that $\bbQ(\{w_{1/n}>0\})<+\infty$ for all $n\geq 1$, so that 
$\ZV^n_t$ is $\P$-a.s. given by a finite sum of terms. Moreover, by the properties of PPPs,
$((s_i,u_i,a_i,w^i): w_{1/n}^i>0)$ is a PPP with intensity $(\delta_0(ds) \otimes \delta_0(du) \otimes F(da) + ds \otimes du\otimes I(da) )\otimes
\un{(w_{1/n}>0)} \, \bbQ(dw)$.
Moreover $Z^n_t\uparrow Z_t$ as $n\uparrow+\infty$ for all $t\geq 0$. 
Now we can write
\[
\ZV_t^n(v) = M^n_t(v)+J^n_t(v),
\]
with
\begin{equation}\label{J^n}
M^n_t(v):= \sum_{a_i \le v , u_i \le V_{s_i}} (w_{t}^i-w_{s_i+\frac1n}^i) \,\un{(s_i+\frac1n\leq t)},
\qquad J^n_t(v) := \sum_{a_i \le v  , u_i \le V_{s_i}} w_{s_i+\frac1n}^i\, \un{(s_i+\frac1n\leq t)}.
\end{equation}
Let us concentrate on $M^n$ first. We can write, for $s_i+\frac1n\leq t$,
\[
w_{t}^i-w_{s_i+\frac1n}^i = \int_{[s_i+\frac1n ,t]\times \R^+\times \R^+} \un{(y< w^i_{r-})} \, x\, \tilde \cN^i(dr,dx,dy)
\]
where $\cN^i$ is defined in \eqref{xy} and $\tilde \cN^i(dr,dx,dy)$ is the compensated version of $\cN^i$: 
\[
\tilde \cN^i(dr,dx,dy) := \cN^i(dr,dx,dy) - \un{(y< w^i_{r-})} \, \nu(dr,dx,dy),
\]
with $\nu$ defined in~\eqref{nu}.
We set
\[
A_{i,n} := \left\{ (y,r) : \ L^i_{r}\leq y < L^i_{r}+w^i_{r-}\, \un{(s_i+\frac1n\leq r )} \right\}, 
\qquad B^{v}_n:=
\bigcup_{a_i \le v\ , \ u_i \le V_{s_i}} A_{i,n}.
\]
Since $\bbQ(\{w_{1/n}>0\})<+\infty$, only finitely many $\{A_{i,n}\}_i$ such that $u_i\le V_{s_i}$ are non-empty $\P$-a.s and, moreover, the $\{A_{i,n}\}_i$ are disjoint. Then by \eqref{cN}
\[
\begin{split}
 &\int_{]0,t]\times \R^+\times \R^+} \un{A_{i,n}}(y,r) \, x\, \tilde \cN(dr,dx,dy) 
\\ & =   \un{(s_i+\frac1n\leq t)}\int_{[s_i+\frac1n,t]\times \R^+\times \R^+} \un{(y< w^i_{r-})} \, x\, \tilde \cN^i(dr,dx,dy) \\ & =
\left( w^i_t-w^i_{s_i+\frac1n} \right)\, \un{(s_i+\frac1n\leq t )}
\end{split}
\]
so that
\[
\int_{]0,t]\times \R^+\times \R^+} \un{B^v_n}(y,r) \, x\, \tilde \cN(dr,dx,dy)  = \sum_{a_i \le v\ , \ u_i \le V_{s_i}} \left( w^i_t-w^i_{s_i+\frac1n} \right)
\, \un{(s_i+\frac1n\leq r)} = M^n_t(v).
\]
We need first the two  following technical lemmas.

\begin{lemma}\label{martingale} 
For a $(\cF_t)$-predictable bounded process $f_t:\R_+\mapsto\R$ we set
\[
M_t := \int_{]0,t]\times\R_+\times\R_+} f_r(y)\, x \, \tilde \cN(dr,dx,dy), \qquad t\geq 0.
\]
Then we have
\[
\begin{split}
\E\left(\sup_{t\in[0,T]} |M_t|\right) \leq & \ C\left(
\sqrt{\int_{[0,T]\times\R_+} \E(f^2_r(y)) \,dr\, dy}+ \int_{[0,T]\times\R_+} \E(|f_r(y)|) \,dr\, dy\right).
\end{split}
\]
\end{lemma}
\begin{proof}
Recall that $\nu_\alpha(dx)=c_\alpha x^{-1-\alpha}\,dx$. We set 
\[
J_{1,t} := \int_{]0,t]\times\R_+\times\R_+} f_r(y)\, \un{(x\leq 1)} \, x \, \tilde \cN(dr,dx,dy), \qquad t\geq 0,
\]
\[
J_{2,t} := \int_{]0,t]\times\R_+\times\R_+} f_r(y)\, \un{(x> 1)} \, x \, \tilde \cN(dr,dx,dy), \qquad t\geq 0.
\]
Then, by Doob's inequality,
\[
\begin{split}
\left(\E\left(\sup_{t\in[0,T]} |J_{1,t}|\right)\right)^2 \leq & \ \E\left(\sup_{t\in[0,T]} |J_{1,t}|^2\right) 
\leq 4\int_{]0,1]}c_\alpha \, x^{1-\alpha}\,dx\int_{[0,T]\times\R_+} \E(f^2_r(y)) \,dr\, dy
\end{split}
\]
while
\[
\begin{split}
\E\left(\sup_{t\in[0,T]} |J_{2,t}|\right) \leq & \ 2 \int_{]1,\infty[} c_\alpha \, x^{-\alpha}\,dx\int_{[0,T]\times\R_+} \E(|f_r(y)|) \,dr\, dy.
\end{split}
\]
\end{proof}

\begin{lemma}\label{LLL} \ 
		\begin{enumerate}
		\item	$\lim_{n\to\infty}\int_{\cE} (z_{\frac1n})^2 \, \un{(z_{\frac1n}\leq 1)} \, \bbQ(dz)=0.$
		\item	$\lim_{n\to\infty}\int_{\cE} z_{\frac1n}\, \un{(z_{\frac1n}\geq 1)} \, \bbQ(dz)=0.$
		\end{enumerate}
	\end{lemma}
	\begin{proof}
		First recall from \eqref{Qmom} that $\int_{\cE} z_{\frac1n}\, \bbQ(dz)=1$ for all $n$.
	The proof of (1) is based on the estimate $\frac{1}{e} x \leq 1-e^{-x}$ for $x\in [0,1]$ which follows from differentiating both sides. Of course, the inequality also implies that
		\begin{align*}
			x^2 \mathbf 1_{(x\leq 1)} \leq e x(1-e^{-x}),\quad x\geq 0.
		\end{align*}
		We apply this estimate to the excursion measure:
		\begin{align}\label{v}
		\begin{split}
			\int_{\cE} (z_{\frac1n})^2 \, \un{(z_{\frac1n}\leq 1)} \, \bbQ(dz)
			&\leq e \int_{\mathcal E}  z_{\frac 1 n } (1-e^{-z_{\frac 1 n}})\, \bbQ(dz)\\
			&=e\left(\int_{\mathcal E} z_{\frac 1 n }\, \bbQ(dz)-\int_{\mathcal E} z_{\frac 1 n} e^{-z_{\frac 1 n}}\, \bbQ(dz)\right).
		\end{split}
		\end{align}
		Next, by \eqref{laplace},
		\begin{align*}
			\int_{\mathcal E}z_{\frac 1 n} e^{-z_{\frac 1 n}}\, \bbQ(dz)&=\frac{d}{d \lambda} u_{1/n}(\lambda)\Big|_{\lambda=1}=\frac{1}{\left(1+(\alpha-1)  \frac{1}{n}\right)^{\alpha/(\alpha-1)}}\stackrel{n\to\infty}{\rightarrow}1,
		\end{align*}
		so that (\ref{v}) combined with $\int_{\cE} z_{\frac1n}\, \bbQ(dz)=1$ proves (1). For (2) we use that $x \mathbf 1_{(x>1)}\leq \frac{e}{(e-1)} x(1-e^{-x})$ to get
		\begin{align*}
			\int_{\mathcal E} z_{\frac 1 n} \mathbf 1_{(z_{\frac 1 n}> 1)}\, \bbQ(dz) \leq \frac{e}{e-1}\int_{\mathcal E} z_{\frac 1 n}(1-e^{-z_{\frac 1 n}}) \, \bbQ(dz)
		\end{align*}
		which goes to zero as argued above.
	\end{proof}

\begin{lemma}\label{3.4}
For all $v\geq 0$ and $T\geq 0$ we have
\[
\lim_{n\to\infty}\E\left( \sup_{t\in[0,T]} |\ZV_t(v)-\ZV_t^n(v)|\right)=0,
\]
$(\ZV_t(v))_{t\geq 0}$ is $\P$-a.s. c\`adl\`ag and $\P$-a.s.
\[
\ZV_t(v) = F(v) + \int_{]0,t]\times\R_+\times\R_+} \un{(y< \ZV_{r-}(v))}
\, x \, \tilde \cN(dr,dx,dy) +I(v)\int_0^{t} V_{s} \, ds.
\]
\end{lemma}

\begin{proof}
We have obtained above the representation
\begin{equation}\label{repY^n}
\ZV_t^n(v) = \int_{]0,t]\times\R_+\times\R_+} \un{B^v_n}(y,r) 
\, x \, \tilde \cN(dr,dx,dy) + J^n_t(v).
\end{equation}
First, let us note that
\[
B^v_n \subset B^v:=\bigcup_{a_i\leq v\ , \ u_i \le V_{s_i}} A_i, \qquad
A_i:= \left\{ (y,r) : \ L^i_{r}\leq y < L^i_{r}+w^i_{r-} \right\},
\]
and moreover
\[
B^v\setminus B^v_n = \bigcup_{a_i\leq v}\left\{ (y,r) : \ L^i_{r}\leq y < L^i_{r}+w^i_{r-} \, \un{(s_i+\frac1n> r)} \right\}
\]
and the latter union is disjoint.
If we set
\[
M_t(v) := \int_{]0,t]\times\R_+\times\R_+} \un{B^v}(y,r) \, x \, \tilde \cN(dr,dx,dy),  
\]
then
\[
\begin{split}
& M_t(v)-M_t^n(v) = \int_{]0,t]\times\R_+\times\R_+} \un{B^v\setminus B^v_n}(y,r) \, x \, \tilde \cN(dr,dx,dy)
\end{split}
\]
and by Lemma \ref{martingale} 
\begin{eqnarray}
\nonumber
\lefteqn{\frac1C\,\E\left(\sup_{t\in[0,T]} |M_t-M^n_t|\right) \leq
 \sqrt{\int_0^T \E(\un{B^v\setminus B^v_n}(y,r)) \, dr\, dy}+ \int_0^T \E(\un{B^v\setminus B^v_n}(y,r)) \, dr\, dy}
\\ \label{excep5_4}
&=& \sqrt{\E\left( \sum_{a_i\leq v ,  u_i \le V_{s_i}} \int_{s_i\wedge T}^{(s_i+\frac1n)\wedge T} w^i_r \, dr \right) }+
\E\left( \sum_{a_i\leq v ,  u_i \le V_{s_i}} \int_{s_i\wedge T}^{(s_i+\frac1n)\wedge T} w^i_r \, dr \right).
\end{eqnarray}
Then we get  
\begin{eqnarray*}
\nonumber
& & \E\left( \sum_{a_i\leq v ,  u_i \le V_{s_i}} \int_{s_i\wedge T}^{(s_i+\frac1n)\wedge T} w^i_r \, dr \right)
\\ &=&  \E\left( \sum_{a_i\leq v} \un{(s_i=0)}\int_{s_i\wedge T}^{(s_i+\frac1n)\wedge T} w^i_r \, dr \right)
  +\E\left( \sum_{a_i\leq v ,  u_i \le V_{s_i}} \un{(s_i>0)}\int_{s_i\wedge T}^{(s_i+\frac1n)\wedge T} w^i_r \, dr \right)\\
\nonumber
&=& \int_0^{T\wedge \frac1n} F(v)  \int_{\cE } w_r \bbQ(dw) dr+\int_0^T \E\left(V_s I(v)\right) \int_s^{(s+\frac1n) \wedge T} \int_{\cE } w_{r-s} \bbQ(dw) dr ds\\
\nonumber
&=& F(v)\left(T\wedge\frac1n\right)
+\int_0^T \E\left(V_sI(v)\right)\left(T\wedge\left(s+\frac1n\right)-s\right) ds,
\end{eqnarray*}
where the last equality follows by~\eqref{Qmom}. By our assumptions on $V$ the right hand side in the above display converges to $0$,
 as  $n\to\infty$. Hence \eqref{excep5_4} also  converges to $0$,
 as  $n\to\infty$. 
Let us now deal with $(J^n_t)_{\geq 0}$. Note that we can write
\[
J^n_{t+\frac1n}(v) = \sum_{a_i \le v , u_i \le V_{s_i}} w^i_{s_i+\frac1n} \, \un{(s_i\leq t)}= A_t^n+\sum_{a_i \le v} w^i_{\frac1n} \, \un{(s_i=0)}+I(v)\int_0^{t} V_{s} \, ds, 
\]
where
\[
A^n_t := \sum_{0<s_i\leq t} w^i_{s_i+\frac1n} \, \un{(a_i \le v, u_i \le V_{s_i})} - I(v)\int_0^{t} V_{s} \, ds, 
\] 
and $(A^n_t)_{t\geq 0}$ is a martingale such that $A^n_0=0$. We have by an analog of Lemma \ref{martingale} and its proof
\[
\E\left(\sup_{t\in[0,T]} |A^n_t|\right) \leq 2\sqrt{K_V \int_{\cE} (z_{\frac1n})^2 \, \un{(z_{\frac1n}\leq 1)} \, \bbQ(dz) }
+ 2K_V\int_{\cE} z_{\frac1n} \, \un{(z_{\frac1n}> 1)} \, \bbQ(dz),
\]
where $K_V:=I(v)\int_0^T \E(V_s)\, ds$. The righthand side tends to zero as $n\to\infty$ by Lemma \ref{LLL}.
Analogously
\[
\begin{split}
& \E\left(\left|\sum_{a_i \le v} w^i_{1/n} \, \un{(s_i=0)}-F(v)\right|\right) 
\\ & \leq 2\sqrt{F(1)\int_{\cE} (z_{\frac1n})^2 \, \un{(z_{\frac1n}\leq 1)} \, \bbQ(dz) }
+ 2F(1)\int_{\cE} z_{\frac1n} \, \un{(z_{\frac1n}> 1)} \, \bbQ(dz),
\end{split}
\]
which again tends to $0$ as $n\to\infty$ by Lemma \ref{LLL}. Therefore 
$$
\bbE \big[ \sup_{t\in[0,T]} |\ZV_t(v)-\ZV^n_t(v)| \big] \to 0.
$$
and, passing to a subsequence, we see that a.s.
\begin{equation}\label{uniform}
\sup_{t\in[0,T]} |\ZV_t(v)-\ZV^{n_k}_t(v)| \to 0
\end{equation}
(observe that in fact we don't need to take a subsequence since $\ZV^n_t$ is monotone non-decreasing in $n$).

In particular, a.s. $(\ZV_t(v), t\geq 0)$ is c\`adl\`ag and we obtain 
\[
\ZV_t(v) = F(v) + \int_{]0,t]\times\R_+\times\R_+} \un{B^v}(y,r)
\, x \, \tilde \cN(dr,dx,dy) +I(v)\int_0^{t} V_{s} \, ds.
\]
It remains to prove that a.s. $B^v = \{(y,r): y< \ZV_{r-}(v)\}$. By definition 
a.s.
\[
\ZV_{r-}(v) = \sum_{a_i\leq v, u_i \le V_{s_i}} w^i_{r-}, \qquad r\geq 0.
\]
If $a_i\leq v$ and $u_i \le V_{s_i}$, then $L^i_{r}+w^i_{r-}\leq \ZV_{r-}(v)$, so that $B^v \subset \{(y,r): y< \ZV_{r-}(v)\}$. On the other hand, if $y< Y_{r-}(v)$, then there is one $j$ such that 
\[
\sum_{a_i<a_j , u_i \le V_{s_i} } w^i_{r-} = L^j_{r} \leq y< L^j_{r}+w^j_{r-}.
\]
Therefore we have obtained the desired results.
\end{proof}
The proof of Proposition \ref{eqv} is complete.
\end{proof}

\subsection{Proof of Theorem \ref{2.4}}
Let us first show uniqueness of solutions to \eqref{ZZ}. Let $v=1$. If $(Z^i_t, t\geq 0)$ for $i=1,2$ is a c\`adl\`ag process satisfying \eqref{ZZ} with $v=1$, then taking the difference we obtain 
\[
\E(|Z^1_t-Z^2_t|) \le  I(1)\int_0^t ds \int_\cE z_{t-s} \, \bbQ(dz) \, \E(|g(Z^1_s)-g(Z^2_s)|) = 
 I(1)\int_0^t ds \, \E(|g(Z^1_s)-g(Z^2_s)|),
\]
where the second equality follows by~\eqref{Qmom}.
By the Lipschitz-continuity of $g$ and the Gronwall Lemma we obtain $Z^1=Z^2$ a.s., i.e. uniqueness of solutions to \eqref{ZZ}.

\medskip 
The next step is to use  an iterative Picard scheme in order to construct a solution of \eqref{ZZ} (and thus of  \eqref{(2.7)}). 
Let $v:=1$, and let us set $Z^0_t:=0$ and for all $n\geq 0$
\[
Z^{n+1}_t:=\sum_{s_i=0} w_{t}^i \, \un{(u_i \le 1)} +
\sum_{s_i>0} w_{t}^i \, \un{(u_i \le g(Z_{s_i-}^n))}, \qquad t\geq 0.
\]
By recurrence
and monotonicity of $g$, $Z^{n+1}_t\geq Z^{n}_t$ and therefore a.s. there exists the limit
$Z_t:=\lim_nZ^{n}_t$. 

To show that $Z$ is actually the solution of  \eqref{ZZ} we show first that it is c\`adl\`ag (by proving that the convergence holds in a norm that makes the space of c\`adl\`ag processes on $[0,T]$ complete) and then by proving that \eqref{ZZ} holds almost surely for each fixed $t\ge 0.$ Let us first show that $Z^n$ is a Cauchy sequence for the norm $\| Z\| = \bbE(\sup_{t\in[0,T]} \left| Z_t \right|)$ for which first we set
\[
Z^{n,k}_t:=Z^{n+k+1}_t-Z^{n+1}_t = \sum_{s_i>0} w_{t}^i \, \un{( g(Z_{s_i-}^{n})<u_i \le g(Z_{s_i-}^{n+k}))}.
\]
By an analog of Proposition \ref{eqv} we can construct a PPP $\cN^{n,k}$ with
the intensity 
measure $\un{(r>0)} \, dr\, \otimes \, c_\alpha \, \un{(x>0)}\, x^{-1-\alpha}\,dx \, \otimes \, \un{(y>0)}\, dy$ such that for all $t\geq 0$
\[
Z^{n,k}_t =  \int_0^t \un{(y< Z^{n,k}_{r-})} \, x\, \tilde \cN^{n,k}(dr,dx,dy)+I(1)\int_0^t\left[g(Z_s^{n+k})-g(Z_{s}^{n})\right]\, ds.
\]
Then by the Lipschitz-continuity of $g$ with the Lipschitz constant $L$,  and by Lemma \ref{martingale} 
\[
\bbE\left(\sup_{t\in[0,T]} \left| Z^{n,k}_t \right| \right) \leq 
\sqrt{\int_0^T \bbE\left(Z^{n,k}_s\right) \, ds}+
\int_0^T \bbE\left(Z^{n,k}_s\right) \, ds + I(1)L 
 \int_0^T \bbE\left(Z^{n-1,k}_s\right) \, ds.
\]
We show now that the right hand side in the latter formula vanishes as $n\to+\infty$ uniformly in $k$. Indeed
\[
\E(Z^{0}_t) = 0, \qquad
0\leq \E(Z^{n+1}_t) = F(1) + \int_0^t \E(g(Z^n_s)) \, ds \leq C+L\int_0^t \E(Z^n_s) \, ds.
\]
Then by recurrence $\E(Z^{n+1}_t) \leq Ce^{tL}$ and by monotone convergence we 
obtain that $\E(Z^{n+1}_t)\uparrow \E(Z_t)\leq Ce^{tL}$. By dominated convergence it follows that 
\[
\int_0^T \E(Z^{n}_s) \, ds \rightarrow \int_0^T \E(Z_s) \, ds,
\]
i.e. the sequence $\int_0^T \E(Z^{n}_s) \, ds$ is Cauchy and we conclude that $Z_n \to Z$ in the sense of the above norm and therefore $Z$ is almost surely c\`adl\`ag. The above argument also show that 
\[
Z_t=\sum_{s_i=0} w_{t}^i \, \un{(u_i \le 1)} +
\sum_{s_i>0} w_{t}^i \, \un{(u_i \le g(Z_{s_i-}))}, 
\]
holds almost surely for each fixed $t$ and therefore for all $t \ge 0$,
i.e. $Z$ is a solution of \eqref{ZZ} for $v=1$. Setting $V_s:=g(Z_{s-}(1))$ and applying Proposition \ref{eqv}, we obtain \eqref{ZZ} and the proof of Theorem \ref{2.4} is complete.

\subsection{Proof of Theorem \ref{2.2}}\label{Section:proofs}
Let us start from existence of a weak solution to \eqref{(2.7)}; by Theorem \ref{2.4} we can build a process $(Z_t(v), t\geq 0, v\in[0,1])$ and a Poisson point process $\mathcal N(dr,dx,dy)$ such that \eqref{ZZ} and \eqref{(2.7)} hold. Now, we set
\[
\textbf X_t := \sum_{s_i=0} w_{t}^i \, \delta_{u_i}  + \sum_{s_i>0} w_{t}^i \, \un{(g(Z_{s_i-}(1))>0)} \, \delta_{u_i /g(Z_{s_i-}(1))},
\]
where $\delta_a$ denote the Dirac mass at $a$; by construction it is clear that $X_t(v):=\textbf X_t([0,v])$, for all $v\in[0,1]$, is a solution to \eqref{(2.7)}. It remains to prove that $(\textbf X_t)_{t\geq 0}$ is c\`adl\`ag in the space of finite measures on the space $[0,1]$. By Lemma \ref{3.4}, for all $v\in[0,1]$, $(X_t(v))_{t\geq 0}$ is c\`adl\`ag; by countable additivity, a.s. $(X_t(v))_{t\geq 0}$ is c\`adl\`ag for all $v\in\bbQ\cap[0,1]$; then, by the compactness of $[0,1]$, it is easy to see that $(\textbf X_t)_{t\geq 0}$ is c\`adl\`ag: for instance, a.s. any limit point of $\textbf X_{t_n}$, for $t_n\geq t$ and $t_n\to t$, is equal on each interval $]a,b]$, $a,b\in\bbQ\cap[0,1]$, to $X_t(b)-X_t(a)=\textbf X_t(]a,b])$. Therefore, we have proved that $(\textbf X_t)_{t\geq 0}$ is a solution to \eqref{(2.7)} in the sense of Definition \ref{def:2.1}.

It remains to prove pathwise uniqueness. Let $(\textbf X^i_t)_{t\geq 0}$, $i=1,2$, be two solutions to \eqref{(2.7)} driven by the same Poisson noise $\cN$ and let us set $X^i_t(v):=\textbf X^i_t([0,v])$, $v\in[0,1]$. Let us first consider the case $v=1$: then $(X^i_t(1), t\geq 0)$, $i=1,2$, solves a particular case of the equation considered by Dawson and Li \cite[(2.1)]{DawsonLi2}; therefore, by \cite[Theorem 2.5]{DawsonLi2},
 $\bbP(X^1_t(1)=X^2_t(1), \, \forall \, t\geq 0)=1$. 

Let us now consider $0\leq v<1$; in this case the equation satisfied by $(X^i_t(v), t\geq 0)$ depends on $(X^i_t(1), t\geq 0)$ and therefore the uniqueness result by Dawson and Li does not apply directly.
Instead, we consider the difference $D_t:=X^1_t(v)-X^2_t(v)$ so that the drift terms cancel  since $X^1(1)=X^2(1)$. Hence, $(D_t, t\geq 0)$ can be treated as 
if $g$ were identically equal to 0. The same proof as in \cite{DawsonLi2} shows that $\bbP(X^1_t(v)=X^2_t(v), \, \forall \, t\geq 0)=1$. Finally, since 
a.s. the two finite measures $\textbf X^1_t$ and $\textbf X^2_t$ are equal on each interval $]a,b]$, $a,b\in\bbQ\cap[0,1]$, they coincide.
Therefore, pathwise uniqueness holds for \eqref{(2.7)}.

Finally, in order to obtain existence of a strong solution, we apply the classical Yamada-Watanabe argument, for instance in the general form proved by Kurtz \cite[Theorem 3.14]{kurtz}.

\section{Proof of Theorem \ref{2.7}}

We
consider the immigration rate function $g(x)=\theta x^{2-\alpha}$, $x\geq 0$. Now $g$ is not Lipschitz-continuous, so that Theorem \ref{2.4} does not apply directly. However, by considering $g^n(x)=\theta (x\vee n^{-1})^{2-\alpha}$, we obtain a monotone non-decreasing and Lipschitz continuous function for which Theorem \ref{2.4} yields existence and uniqueness of a solution $(X^n_t(v), t\geq 0, v\geq 0)$ to \eqref{(2.7)}. We now define $ T^0:=0$,
$T^n := \inf\{t>0: \ X^n_t(1) = n^{-1}\}$ and
\[
X_t(v) := \sum_{n\geq 1} X^n_t(v) \, \un{(T^{n-1}\leq t< T^n)}.
\]
By construction, $T_0:=\sup_n T^n$ is equal to $\inf\{s>0: X_s(1)=0\}$, and moreover $X_t(1)=0$ for all $t\geq T_0$.
By pathwise uniqueness, if $n\geq m$ then $X_t^n(v)=X_t^m(v)$ on $\{t\leq T^m\}$, and therefore $(X_t(v), t\geq 0, v\geq 0)$ is a solution to \eqref{(2.7)} for $g(x)=\theta x^{2-\alpha}$  with the desired properties. Pathwise uniqueness follows from the same localisation argument.\\

	To prove that the right-hand side of \eqref{E: from Y to X via SDE} is well-defined, i.e. the denominator is always strictly positive, we are going to apply Lamperti's representation for self-similar Markov process.
		A positive self-similar Markov process of index $w$ is a strong Markov family $(\P^x)_{x>0}$ with coordinate process 			denoted by $(U_t)_{t\geq 0}$ in the Skorohod space of c\`adl\`ag functions with values in $[0,+\infty[$, satisfying
		\begin{equation}\label{05}
			\text{the law of } (cU_{c^{-1/w} t})_{t\geq 0}\text{ under }\P^x \text{ is given by }\P^{cx}
		\end{equation}
		for all $c>0$. John Lamperti has shown in \cite{L} that this property is equivalent to the existence of a L\'evy process $\xi$ such that, under $\P^x$,  the process $( U_{t\wedge T_0})_{t\geq 0}$ has the same law as
		$\big( x \exp\big(\xi_{A^{-1}({tx^{-1/w})}} \big)\big)_{t\geq 0}$, where
		\begin{align*}
			A^{-1}(t):=\inf\{s\geq 0: A_s> t\}\qquad \text{and}\qquad A(t):=\int_0^t \exp\left(\frac{1}{w}\, \xi_s\right) \, ds.
		\end{align*}
	We now use Lamperti's representation to find a surprisingly simple argument for the well-posedness of \eqref{E: from Y to X via SDE}.
	\begin{lemma}
		The right-hand side of \eqref{E: from Y to X via SDE} is well-defined for all $v\in [0,1]$ and $t\geq 0$.
	\end{lemma}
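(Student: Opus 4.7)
The plan is to invoke Lamperti's representation (given in the paragraph above the lemma) for the total-mass process $(X_t(1))_{t\geq 0}$ and observe an exact cancellation between the power $X_s(1)^{1-\alpha}$ in the integrand of $S$ and the Lamperti time change. As the text stresses, $(X_t(1))_{t\geq 0}$ is a positive self-similar Markov process, with index $w = 1/(\alpha-1)$: the scaling $Y_t := c\, X_{c^{-(\alpha-1)} t}(1)$ leaves the SDE \eqref{(2.7)} (with $F(1)=I(1)=1$ and $g(x)=\theta x^{2-\alpha}$) invariant in law after rescaling $X_0$ to $cX_0$, because the $\alpha$-stable noise rescales correctly and the immigration term $\theta X^{2-\alpha}$ has precisely the right homogeneity. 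Since the Poisson jumps are strictly positive and the immigration drift is nonnegative, absorption at zero happens continuously, so the Lamperti L\'evy process $\xi$ has infinite lifetime. Hence, on $[0,T_0)$,
\[
X_t(1) = v \exp\!\bigl( \xi_{A^{-1}( t v^{-(\alpha-1)})}\bigr), \qquad A(u) := \int_0^u e^{(\alpha-1) \xi_s}\, ds,
\]
with $v := X_0(1)=1$ and $T_0 = v^{\alpha-1} A(+\infty)$.

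The key computation is a one-line change of variables. Setting $u = A^{-1}(s v^{-(\alpha-1)})$, so that $s = v^{\alpha-1} A(u)$ and $ds = v^{\alpha-1} e^{(\alpha-1)\xi_u}\, du$, the factor $X_s(1)^{1-\alpha} = v^{1-\alpha} e^{(1-\alpha)\xi_u}$ cancels exactly with the Jacobian:
\[
X_s(1)^{1-\alpha}\, ds \;=\; v^{1-\alpha} e^{(1-\alpha)\xi_u} \cdot v^{\alpha-1} e^{(\alpha-1)\xi_u}\, du \;=\; du .
\]
Therefore, for every $t < T_0$,
\[
\int_0^t X_s(1)^{1-\alpha}\, ds \;=\; A^{-1}\!\bigl( t v^{-(\alpha-1)}\bigr),
\]
which is finite for $t<T_0$ and, as $t\uparrow T_0 = v^{\alpha-1} A(+\infty)$, tends to the lifetime of $\xi$, namely $+\infty$. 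Multiplying by the constant $\alpha(\alpha-1)\Gamma(\alpha)$ does not affect this, so $S$ is a continuous, strictly increasing bijection from $[0,T_0)$ onto $[0,+\infty)$. Consequently $S^{-1}(t)<T_0$ and $X_{S^{-1}(t)}(1)>0$ for every $t\ge 0$, which is exactly the well-posedness of \eqref{E: from Y to X via SDE}.

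The only delicate point to check is that the L\'evy process $\xi$ produced by Lamperti's theorem has infinite lifetime, i.e.\ that $X_t(1)$ cannot be absorbed at $0$ via a jump. This is the point that genuinely uses the structure of equation~\eqref{(2.7)}: the driving Poisson noise has only positive jumps and the immigration drift is nonnegative, so $X_t(1)$ can only reach zero continuously (through the compensator part), forcing $\xi$ to be a plain L\'evy process with no killing. Once this is in place the rest of the argument is purely the algebraic cancellation above, which is what the paper refers to as the ``interesting cancellation effect of Lamperti's transformation and the time-change $S$''.
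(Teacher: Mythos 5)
Your proof is correct and follows essentially the same route as the paper: apply Lamperti's representation to the self-similar total-mass process, observe the exact cancellation between the factor $X_s(1)^{1-\alpha}$ and the Lamperti time-change Jacobian so that $S$ is (up to a constant) $A^{-1}$, and conclude that $S$ explodes as $t\uparrow T_0$ because $\xi$ has infinite lifetime. The only cosmetic difference is how the infinite lifetime of $\xi$ is justified: you derive it directly from the observation that $X_t(1)$ has only positive jumps and so can only be absorbed at zero continuously, whereas the paper simply cites the explicit computation of $\xi$ in \cite[Lemma 2.2]{BDMZ} (together with Fournier's result to pass to the Lévy-driven SDE); both justifications are sound.
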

	\begin{proof}
		In Lemma 1 of \cite{BDMZ} it was  shown that, if $L$ is a spectrally positive $\alpha$-stable L\'evy process as in \eqref{4b}, solutions to the SDE
		\begin{equation}\label{F}
			X_t=X_0 + \int_0^t X_{s-} ^{1/\alpha} \,dL_s+\theta \int_0^t X_s^{2-\alpha}\,ds
		\end{equation}
		trapped at zero induce a positive self-similar Markov process of index $1/(\alpha-1)$. The corresponding L\'evy process $\xi$ has been calculated explicitly in \cite[Lemma 2.2]{BDMZ}, but for the proof here we only need that $\xi$ has infinite lifetime  and additionally a remarkable cancellation effect between the time-changes. Since, by Lemma 1 of Fournier \cite{Fournier}, the unique solution to the SDE \eqref{F} for $X_0=1$ coincides in law with the unique solution to 
		\begin{align*}
			X_t = 1 +\int_{]0,t]\times\R_+\times\R_+} \un{(y<{X_{s-}})} \, x\,  \tilde\cN(ds, dx , dy) + \theta\int_0^t X_{s}^{2-\alpha}\, ds,
		\end{align*}
		we see that the total-mass process $(X_t(1))_{t\geq 0}$ and $\left( \exp\left(\xi_{A^{-1}(t)} \right)\right)_{t\geq 0}$ are equal in law up to first hitting $0$. 
 Applying the Lamperti transformation for $t<T_0$ yields
		\begin{align*}
			\bar S(t)&:=\int_0^t X_s(1)^{1-\alpha}\,ds\\
			&=\int_0^t \exp((1-\alpha)\xi_{A^{-1}(s)})\,ds\\
			&=\int_0^{A^{-1}(t)}\exp((1-\alpha)\xi_s)\exp((\alpha-1)\xi_s)\,ds\\
			&=A^{-1}(t)
		\end{align*}
		so that $\bar S$ and $A$ are reciprocal for $t<T_0$. Plugging this identity into the Lamperti transformation yields
			\begin{equation}\label{abd}
				0= X_{T_0}(1)=\lim_{t\uparrow T_0} X_t(1)=\lim_{t\uparrow T_0}\exp(\xi_{A^{-1}(t)})=\lim_{t\uparrow T_0}\exp(\xi_{\bar S(t)}).
			\end{equation}
			 For the second equality we used left-continuity of $X(1)$ at $T_0$ which is due to Section 3 of \cite{L} because the L\'evy process $\xi$ does not jump to $-\infty$. Using that $\xi_t>-\infty$ for any $t\in [0,\infty)$, from\eqref{abd} we see that $\bar S$ explodes at $T_0$, that is $\bar S(T_0)=\infty$. Since $S$ and $\bar S$ only differ by the factor $\alpha(\alpha-1)\Gamma(\alpha)$, it also holds that $S(T_0)=\infty$ so that $X_{S^{-1}(t)}(1)>0$ for all $t\geq 0$.
\end{proof}	
 
		We can now show how to construct on a pathwise level the Beta-Fleming-Viot processes with mutations the measure-valued branching process.

      \begin{proof}[Proof of Theorem \ref{2.7}]

		Suppose $\mathcal N$ is the PPP with compensator measure $\nu$ that drives the strong solution of \eqref{(2.7)} with atoms
$(r_i,x_i,y_i)_{i\in I}\in (0,\infty)\times (0,\infty)\times (0,\infty)$. Then we define a 
new point process on $(0,\infty)\times (0,\infty)\times (0,\infty)$ by
		\begin{align*}
			\mathcal M (ds,d z,d u):=\sum_{i\in I} \delta_{\big\{S(r_i),\frac{ x_i}{X_{S(r_i)-}(1)+x_i \textbf 1_{(y_i\leq X_{S(r_i)-}(1))}},\frac{y_i}{X_{S(r_i)-}(1)}\big\}}(ds,d z,d u).
		\end{align*}
		If we can show that the restriction $\mathcal M_|$ of $\mathcal M$ to $(0,\infty)\times (0,1)\times (0,1)$ is a PPP with intensity measure 
	$\mathcal M_|'(ds,dz,du)=ds \otimes  C_\alpha z^{-2}z^{1-\alpha}(1-z)^{\alpha-1}dz\otimes du$ and furthermore that
		\begin{align*}
			\bigg(R_t(v):=\frac{X_{S^{-1}(t)}(v)}{X_{S^{-1}(t)}(1)}\bigg)_{t\geq 0, v\in [0,1]}
		\end{align*}
		is a solution to \eqref{FVI} with respect to $\mathcal M_|$, then the claim follows from the pathwise uniqueness of \eqref{FVI}.
		\smallskip
		
		\textbf{Step 1:}	We have
		\begin{align*}
			&\quad R_t(v)
			 =\frac{X_{S^{-1}(t)}(v)}{X_{S^{-1}(t)}(1)} = \\
			&=\frac{X_0(v)}{X_0(1)}+\int_0^{S^{-1}(t)}\int_0^\infty\int_0^\infty\Bigg[\frac{X_{r-}(v)+ x\mathbf 1_{(y\leq X_{r-}(v))}}{X_{r-}(1)+ x\mathbf 1_{(y\leq X_{r-}(1))}}-\frac{X_{r-}(v)}{X_{r-}(1)}\Bigg] (\mathcal{N}-\nu)(dr,dx,dy)	\\
			&\quad +\int_0^{S^{-1}(t)}\int_0^\infty\int_0^\infty\Bigg[\frac{X_{r-}(v)+ x\mathbf 1_{(y\leq X_{r-}(v))}}{X_{r-}(1)+ x\mathbf 1_{(y\leq X_{r-}(1))}}-\frac{X_{r-}(v)}{X_{r-}(1)}-\frac{ x\mathbf 1_{(y\leq X_{r-}(v))} }{X_{r-}(1)}\\
			&\quad\quad+\frac{ x\mathbf 1_{(y\leq X_{r-}(1))} X_{r-}(v)}{X_{r-}(1)^2}\Bigg] \nu(dr,dx,dy)\\
			&\quad + \theta v\int_0^{S^{-1}(t)}\frac{1}{X_{r}(1)}X_r(1)^{2-\alpha} \,dr- \theta \int_0^{S^{-1}(t)}\frac{X_r(v)}{X_{r}(1)^2} X_r(1)^{2-\alpha}\,dr\\
			&=v+\int_0^{S^{-1}(t)}\int_0^\infty\int_0^\infty\Bigg[\frac{X_{r-}(v)+ x\mathbf 1_{(y\leq X_{r-}(v))}}{X_{r-}(1)+ x\mathbf 1_{(y\leq X_{r-}(1))}}-\frac{X_{r-}(v)}{X_{r-}(1)}\Bigg] \mathcal{N}(dr,dx,dy)\\
			&\quad + \theta v\int_0^{S^{-1}(t)}X_r(1)^{1-\alpha} \,dr- \theta \int_0^{S^{-1}(t)}\frac{X_r(v)}{X_{r}(1)} X_r(1)^{1-\alpha}\,dr.
		\end{align*}
		To verify the third equality, first note that due to Lemma II.2.18 of \cite{JS} the compensation can be split from the martingale part and then can be canceled by the compensator integral since integrating-out the $y$-variable 		yields	
		\begin{align*}
			&\quad \int_0^{S^{-1}(t)}\int_0^\infty\int_0^\infty\Bigg[-\frac{ x\mathbf 1_{(y\leq X_{r-}(v))} }{X_{r-}(1)}+\frac{ x\mathbf 1_{(y\leq X_{r-}(1))} X_{r-}(v)}{X_{r-}(1)^2}\Bigg] c_\alpha u^{-1-\alpha} dr\,dx\,dy =0.
		\end{align*}
		To replace the jumps governed by the PPP $\mathcal N$ by jumps governed by $\mathcal M$ note that by the definition of $\mathcal M$ we find, for suitable test-functions $h$, the almost sure transfer identity
		\begin{equation}\label{ff}\begin{split}
				& \int_0^{S^{-1}(t)}\int_0^\infty\int_0^\infty h\left(S(r),\frac{x}{X_{S(r)-}(1)+  x\mathbf 1_{(y\leq X_{S(r)-}(1))}},\frac{y}{X_{S(r)-}(1)}\right)\mathcal N(dr,dx,dy)\\
				&=\int_0^{t}\int_0^1\int_0^\infty h(s, z,  u)\,\mathcal M(ds,d z,d u)\end{split}
		\end{equation}
		or in an equivalently but more suitable form
		\begin{equation}
			\label{ee}
			\begin{split}
				&\quad \int_0^{S^{-1}(t)}\int_0^\infty\int_0^\infty h\left(r,\frac{x}{X_{r-}(1)+  x\mathbf 1_{(y\leq X_{r-}(1))}},\frac{y}{X_{r-}(1)}\right)\mathcal N(dr,dx,dy)\\
				&=\int_0^{t}\int_0^1\int_0^\infty h\big(S^{-1}(s), z,  u\big)\,\mathcal M(d s,d z,d u).\end{split}
		\end{equation}		
		Since the integrals are non-compensated we actually defined $\mathcal M$ in such a way that the integrals produce exactly the same jumps. 
		
		Let us now rewrite the equation found for $R$ in such a way that \eqref{ee} can be applied:
		\begin{align*}
			 R_t(v)
			&=v+\int_0^{S^{-1}(t)}\int_0^\infty\int_0^\infty\Bigg[\frac{x\mathbf 1_{(y\leq X_{r-}(v))}X_{r-}(1)-X_{r-}(v) x\mathbf 1_{(y\leq X_{r-}(1))}}{(X_{r-}(1)+ x\mathbf 1_{(y\leq X_{r-}(1))})X_{r-}(1)}\Bigg] \mathcal{N}(dr,dx,dy)\\
				&\quad + \theta \int_0^{S^{-1}(t)}\Big[vX_r(1)^{1-\alpha} -\frac{X_r(v)}{X_{r}(1)} X_r(1)^{1-\alpha}\Big]\,dr\\
			&=v+\int_0^{S^{-1}(t)}\int_0^\infty\int_0^\infty\frac{x}{X_{r-}(1)+ x\mathbf 1_{(y\leq X_{r-}(1))}} \\
			&\quad\quad \times 
			\left[\mathbf 1_{(y\leq X_{r-}(v))}-\frac{X_{r-}(v)}{X_{r-}(1)}\mathbf 1_{(y\leq X_{r-}(1))}\right] \mathcal{N}(dr,dx,dy)\\
				&\quad + \theta \int_0^{S^{-1}(t)}\Big[vX_r(1)^{1-\alpha} -\frac{X_r(v)}{X_{r}(1)} X_r(1)^{1-\alpha}\Big]\,dr.
		\end{align*}
		The stochastic integral driven by $\mathcal N$ can now be replaced by a stochastic integral driven by $\mathcal M$ via \eqref{ee}:
		\begin{align*}
			&\quad R_t(v)
			= v+\int_0^{t}\int_0^1\int_0^\infty  z\bigg[\mathbf 1_{(  uX_{S^{-1}( s)-}(1)\leq {X_{S^{-1}( s)-}(v)})} - \\
			&\quad\quad- R_{S^{-1}( s)-}( v)\mathbf 1_{( uX_{S^{-1}( s)-}(1)\leq {X_{S^{-1}( s)-}(1)})} \bigg]\mathcal{M}(d s,d z,d u)
			+ \theta \int_0^{t}\big[v-R_s(v)\big]\,ds\\
			&=v+\int_0^{t}\int_0^1\int_0^\infty  z\bigg[\mathbf 1_{(u\leq R_{ s-}(v))}- R_{ s-}( v)\mathbf 1_{( u\leq 1)} \bigg]\mathcal{M}(d s,d z,d u)
			 + \theta \int_0^{t}\big[v-R_s(v)\big]\,ds.
		\end{align*}
		By monotonicity in $v$, $R_t(v)\leq 1$ so that the $du$-integral in fact only runs up to $1$ and the second indicator can be skipped:
		\begin{align*}
			R_t(v)=v+\int_0^{t}\int_0^1\int_0^1  z\bigg[\mathbf 1_{(u\leq R_{ s-}(v))}- R_{ s-}( v) \bigg]\mathcal{M_|}(d s,d z,d u)
			 + \theta \int_0^{t}\big[v-R_s(v)\big]\,ds.
		\end{align*}
		This is precisely the equation we wanted to derive.
		\smallskip
		
		\textbf{Step 2:} The proof is complete if we can show that the restriction $\mathcal M_|$ of $\mathcal M$ to $(0,\infty)\times [0,1]\times [0,1]$ is a PPP with intensity $\mathcal M'(ds,dz,du)=ds \otimes C_\alpha z^{-1-\alpha}(1-z)^{\alpha-1}dz\otimes du$. For this sake, we choose a measurable predictable function $W:\Omega\times (0,\infty)\times (0,1)\times (0,1)\to \R$, plug-in the definition of $\mathcal M_|$ and use the compensator measure $\nu$ of $\mathcal N$ to obtain via \eqref{ff}
		\begin{align*}
			&\E\left(\int_0^t\int_0^1\int_0^1 W( s, z, u) \mathcal M_|(d s,d z,d u)\right)
			=\E\left(\int_0^t\int_0^1\int_0^\infty \mathbf 1_{(u\leq 1)} W( s, z, u) \mathcal M(d s,d z,d u)\right)\\
			&=\E\bigg(\int_0^{S^{-1}(t)}\int_0^\infty\int_0^\infty \mathbf 1_{\big(\frac{y}{X_{S(r)-}(1)}\leq 1\big)}\\
			&\quad \quad\times W\bigg(S(r),\frac{x}{X_{S(r)-}(1)+ x\mathbf 1_{(y\leq X_{S(r)-}(1))}},\frac{y}{X_{S(r)-}(1)}\bigg) \mathcal N(dr,dx,dy)\bigg)
		\end{align*}
		which, by predictable projection and change of variables, equals 
		\begin{align*}
			&\quad\E\bigg(\int_0^{S^{-1}(t)}\int_0^\infty\int_0^\infty \mathbf 1_{\big(\frac{y}{X_{S(r)}(1)}\leq 1\big)} \\
			&\quad\quad \times W\bigg(S(r),\frac{x}{X_{S(r)}(1)+ x\mathbf 1_{(\frac{y}{X_{S(r)}(1)}\leq  1)}},\frac{y}{X_{S(r)}(1)}\bigg)c_\alpha x^{-1-\alpha}\,dr\, dx\, dy\bigg).
			\end{align*}
		Now we substitute the three variables $r, x, y$ (in this order), using $C_\alpha=\frac{1}{\alpha(\alpha-1)\Gamma(\alpha)}c_\alpha$ for the substitution of $r$ and the identity
		\begin{align*}
			\int_0^\infty g\left(\frac{x}{a+x}\right)x^{-1-\alpha}\,dx=a^{-\alpha}\int_0^1 g(z) z^{-1-\alpha}(1-z)^{\alpha-1}\,dz
		\end{align*}		
		for the substitution of $x$ to obtain
		\[
		\begin{split}
&\quad			\E\left(\int_0^t\int_0^1\int_0^1W( s, z, u) \mathcal M_|(d s,d z,d u)\right)
\\ & =\E\left(\int_0^{t}\int_0^1\int_0^1 W(s,z,u) C_\alpha z^{-1-\alpha}(1-z)^{\alpha-1}ds\, dz\, du\right).
		\end{split}
		\]
		Now it follows from Theorems II.4.8 and combined with the definitions of $c_\alpha, C_\alpha$ that $\mathcal M_|$ is a PPP with intensity $ds \otimes  C_\alpha z^{-2}z^{1-\alpha}(1-z)^{\alpha-1}dz\otimes du$.
	\end{proof}


\section{Proof of Theorem \ref{T:main} }\label{111}
  Let us briefly outline the strategy for the proof: In order to show that the measure-valued process $\textbf Y$, $\P$-a.s., does  not posess times $t$ for which $\textbf Y_t$ has
finitely many atoms, by Theorem \ref{2.7} it suffices to show that $\P$-a.s. the same is true for the measure-valued branching process $\textbf X$. In order to achieve this, it suffices 
to deduce the same property for the Pitman-Yor type representation up to extinction, i.e. we need to show that 
		\begin{equation}\label{hjj}
		\bbP\big( \#\{ v\in\, ]0,1]:  Z_t(v)-Z_t(v-)>0\}=\infty, \ \forall \, t\in\,]0,T_0[ \big) =1.
		\end{equation}
Interestingly, this turns out to be easier due to a comparison property that is not available for $\textbf Y$.\\

	We start the proof with a technical result on the covering of a half line by the shadows of a Poisson point process defined on some probability space $(\Omega, \mathcal G, \mathcal G_t, P)$. Suppose $(s_i,h_i)_{i\in I}$ are the points of a Poisson point process $\Pi$ on $(0,\infty)\times (0,\infty)$ 
with intensity $dt\otimes \Pi'(dh)$. For a point $(s_i,h_i)$ we define the shadow on the half line $\R^+$ by $(s_i,s_i+h_i)$ which is precisely the line segment covered by the shadow of the line segment connecting $(s_i,0)$ and $(s_i,h_i)$ with light shining in a 45 degrees angle from the above left-hand side. Shepp proved that the half line $\R^+$ is almost surely fully covered by the shadows induced by the points $(s_i,h_i)_{i\in I}$  if and only if
			\begin{equation}\label{Sh}
			\int_0^1  \exp\left( \int_t^1 (h-t) \,\Pi' (dh)\right)dt=\infty.
		\end{equation}
		The reader is referred to the last remark of \cite{shepp}. For our purposes we need the following variant:
 
\begin{lemma}\label{lemma1}
			Suppose $\Pi$ is a PPP with intensity $dt\,\otimes \,\Pi'(dh)$ and Shepp's condition \eqref{Sh} holds, then
			\begin{align*}
				P\big( \#\big\{  s_i \le t  :  (s_i,h_{i})\in\Pi\text{ and } s_i+h_{i}> t\big\}=\infty, \ \forall\, t>0\big)=1,
			\end{align*}	
			i.e. almost surely every point of $\R^+$ is covered by the shadows of infinitely many line segments.
		\end{lemma}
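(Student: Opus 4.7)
The plan is to deduce the lemma from Shepp's covering theorem itself, applied to a decreasing family of restrictions of $\Pi$ obtained by truncating the length coordinate. For $\epsilon \in (0,1)$, let $\Pi^{(\epsilon)} := \{(s_i,h_i) \in \Pi : h_i < \epsilon\}$, which by the standard restriction property is a Poisson point process with intensity $dt \otimes \mathbf{1}_{\{h < \epsilon\}}\,\Pi'(dh)$.

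The first step is to verify that $\Pi^{(\epsilon)}$ still satisfies Shepp's condition \eqref{Sh}. Writing
\[
\int_t^1 (h-t)\,\Pi'(dh) = \int_t^\epsilon (h-t)\,\Pi'(dh) + \int_\epsilon^1 (h-t)\,\Pi'(dh),
\]
the second term is uniformly bounded by $C_\epsilon := \int_\epsilon^1 h\,\Pi'(dh) < \infty$, since $\Pi'$ is finite on compact subsets of $(0,\infty)$. Splitting the outer integral in \eqref{Sh} as $\int_0^1 = \int_0^\epsilon + \int_\epsilon^1$, the piece over $[\epsilon,1]$ is clearly finite (the inner integral is bounded there), so Shepp's condition for $\Pi$ forces $\int_0^\epsilon \exp\!\bigl(\int_t^1 (h-t)\Pi'(dh)\bigr)\,dt = \infty$; dividing by $e^{C_\epsilon}$ gives $\int_0^\epsilon \exp\!\bigl(\int_t^\epsilon (h-t)\Pi'(dh)\bigr)\,dt = \infty$, which is exactly \eqref{Sh} for $\Pi^{(\epsilon)}$.

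The second step applies Shepp's theorem to each $\Pi^{(1/k)}$, $k \in \N$, and takes a countable intersection: on an event $\Omega_0$ of full probability, for every $k \geq 1$ and every $t > 0$ there exists some $(s,h) \in \Pi$ with $h < 1/k$ and $s \leq t < s+h$. Working on $\Omega_0$, fix $t > 0$ and, for each $k$, select such a covering segment $\sigma_k \in \Pi$ with length $\ell_k < 1/k$. Any given segment $(s_i,h_i) \in \Pi$ has $h_i > 0$, so it can coincide with $\sigma_k$ only for those $k$ satisfying $k \leq 1/h_i$, i.e., for finitely many $k$. Hence the sequence $(\sigma_k)_{k \geq 1}$ cannot take only finitely many distinct values, producing infinitely many pairwise distinct points of $\Pi$ whose shadows cover $t$.

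The main obstacle is the first step: verifying that truncating the large-$h$ tail preserves Shepp's condition. Conceptually this is clear because Shepp's condition concerns the blow-up of $\int_t^1(h-t)\Pi'(dh)$ as $t \downarrow 0$, a feature controlled entirely by the small-$h$ part of $\Pi'$; nevertheless one must quantify that the removed tail contributes only a bounded multiplicative factor in the exponential. Once this is in place, the remainder is a clean countability argument exploiting that the infinitely many length scales $\{h < 1/k\}_{k \geq 1}$ must be filled by distinct segments.
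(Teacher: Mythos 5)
Your proof is correct and follows essentially the same strategy as the paper: truncate the Poisson point process to small lengths $h<1/k$, invoke Shepp's theorem for each truncation, and take a countable intersection. The only notable difference is that you spell out carefully why Shepp's condition \eqref{Sh} survives truncation (bounding the large-$h$ contribution by a constant $C_\epsilon$ and factoring it out of the exponential), whereas the paper simply asserts this because ``Shepp's criterion only involves the intensity measure around zero'' --- your more detailed verification is a useful addition.
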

		
		\begin{proof}
			The proof is an iterated use of Shepp's result for the sequence of restricted Poisson point processes $\Pi_k$ obtained by removing all the atoms $(s_i,h_i)$ with $h_i>\frac{1}{k}$ 
from $\Pi$, i.e. restricting the 
intensity measure to $[0,\frac 1 k]$. Since Shepp's criterion \eqref{Sh} only involves the intensity measure around zero, the shadows of all point processes $\Pi_k$ cover the half line. Consequently, if there is some $t>0$ such that $t$ is only covered by the shadows of finitely 
many points $(s_i,h_i)\in \Pi$, then $t$ is not covered by the shadows generated by $\Pi_{k'}$ for some $k'$ large enough. But this is a contradiction to Shepp's result
 applied to $\Pi_{k'}$.
		\end{proof}
    Now we want to apply Shepp's result to the Pitman-Yor type representation. We want to prove that \eqref{hjj} holds for any $\theta>0$.
		Let us set for all $\epsilon>0$
		\[
		T_\epsilon:= \inf\{t>0: Z_t(1)\leq \epsilon\}.
		\]
		Then it is clearly enough to prove that for all $\epsilon>0$
		\[
		\bbP\big( \#\{ v\in\, ]0,1]:  Z_t(v)-Z_t(v-)>0\}=\infty, \ \forall \, t\in\,]0,T_\epsilon[ \big) =1.
		\]
		In order to connect the covering lemma with the question of exceptional times, we use the comparison property of the Pitman-Yor representation to reduce the problem to the process $Z^\eps$ explicitly defined by
 	\begin{equation}\label{c}
		Z^\eps_t(v) = \sum_{s_i>0} w^i_{t}\, \un{(u_i \le v\,\theta \eps^{2-\alpha})} ,\qquad
		 v \in [0,1],\ t\geq0.
	\end{equation} 
	Setting
	\[
	N_t:=\#\big\{ v\in\, ]0,1]:  Z_t(v)-Z_t(v-)>0\big\}, \qquad N_t^\eps:=\#\big\{ v\in\, ]0,1]:  Z_t^\eps(v)-Z_t^\eps(v-)>0\big\},
	\]
	it is obvious by the definition of $Z$ and $Z^\eps$ that
	\begin{equation}\label{Le}
      	  \bbP( N_t \geq N^\eps_t, \ \forall \, t\in\,]0,T_\epsilon[\} ) =1.
	\end{equation}
	We are now prepared to prove our main result.
	
	\begin{proof}[Proof of Theorem \ref{T:main}]
		Due to \eqref{Le} we only need to show that almost surely $v\mapsto Z^\eps_t(v)$ has infinitely many jumps for all $t>0$ and arbitrary $\eps>0$. To verify the latter, Lemma \ref{lemma1}
will be applied to a PPP defined in the sequel. 
If $\Pi$ denotes the Poisson point process with atoms $(s_i,w^i,u_i)_{i\in I}$ from which 
$Z^\eps_t(v)$ is defined, then we define a new Poisson point process 
$\Pi_l$ via the atoms
		\begin{align*}
			(s_i,h_i,u_i)_{i\in I}:=(s_i,\ell(w^i),u_i)_{i\in I},
		\end{align*}
		where  $\ell(w):=\inf\{t>0: w_t=0\}$ denotes the length of the trajectory $w$. In order to apply Lemma \ref{lemma1} we need the intensity of $\Pi_l$.
 Using the definition of $\Q$ and the Laplace transform duality \eqref{laplace} with the explicit form
\[
		\int\left(1-e^{-\lambda w_t}\right) \bbQ(dw)= \left( \lambda^{1-\alpha} + (\alpha-1)t\right)^{\frac1{1-\alpha}},
\]
		we find the distribution
		\begin{equation}\label{bb}\begin{split}
			\Q(\ell(w)>h)&=  \Q(w_h>0) = \lim_{\lambda\to+\infty}  \Q(1-e^{-\lambda w_h})\\
			&= \lim_{\lambda\to+\infty}  u_h(\lambda) = ((\alpha-1)h)^{\frac{1}{1-\alpha}}.
			\end{split}
		\end{equation}
		Differentiating in $h$ shows that $\Pi_l$ is a Poisson point process on $\R^+\times \R^+\times \R^+$ with intensity measure
		\begin{align*}
			\Pi_l'(dt,dh,du)=dt\otimes ((\alpha-1)h)^{\frac \alpha{(1-\alpha)}}\,dh\otimes du.
		\end{align*}
		Plugging-in the new definitions leads to
		\begin{equation}\label{hj}\begin{split}
			N^\eps_t	
			&= \big(\text{number of non-zero summands of }  Z^\eps_t(1)\big)_{t\geq 0} \\
			&= \big(\#\big\{  s_i \le t  :  (s_i,w_i,u_i)\in\Pi\text{ and } w^i_{t-s_i}\,\un{( u_i \le  \theta \eps^{2-\alpha})}> 0\big\}\big)_{t\geq 0}\\
			&= \big(\#\big\{  s_i \le t  :  (s_i,w_i,u_i)\in\Pi\text{ and }\ell(w_i)>t-s_i, u_i \le \theta \eps^{2-\alpha}\big\}\big)_{t\geq 0}\\
			&= \big(\#\big\{  s_i \le t  :  (s_i,h_i,u_i)\in\Pi_l\text{ and }s_i+h_i>t, u_i \le \theta \eps^{2-\alpha}\big\}\big)_{t\geq 0}.\end{split}
		\end{equation}		
		There is one more simplification that we can do. Let us define $\Pi_{l,\eps}$ as a Poisson point process on $(0,\infty)\times (0,\infty)$ with
 intensity measure
		\begin{equation}\label{S}
			\Pi_{l,\eps}'(dt,dh)=\theta \eps^{2-\alpha}\, dt\otimes   \,(\alpha-1)^{\alpha/(1-\alpha)}h^{\frac \alpha{(1-\alpha)}}\,dh,
		\end{equation}
		then by the properties of Poisson point processes we have the equality in law 
		\[
			\{(s_i,h_i) : (s_i,h_i,u_i)\in\Pi_l\text{ and } u_i \le \theta \eps^{2-\alpha} \}	\stackrel{(d)}{=} \Pi_{l,\eps}.
\]
		Then \eqref{hj} yields 
		\begin{align*}
			\big(N_t^\eps \big)_{t\geq 0} \stackrel{(d)}{=} \big(\#\big\{  s_i \le t  :  (s_i,h_i)\in\Pi_{l,\eps}\text{ and } s_i+h_i > t\big\}\big)_{t\geq 0}.
		\end{align*}
		Now we are precisely in the setting of Shepp's covering results and the theorem follows from Lemma \ref{lemma1} if \eqref{Sh} holds. Shepp's condition can be checked easily for $\Pi_{l,\eps}$ for \eqref{S} independently of $\theta$ and $\eps$.
	\end{proof}

\section{A Proof of Schmuland's Theorem}\label{sec:schmuland}
	In this section we sketch how our lines of arguments can be adopted for the continuous case corresponding to $\alpha=2$. 
The proofs go along the same lines (reduction to a measure-valued branching process and then to an excursion representation for which the covering result can be applied) but are much simpler due to a constant immigration structure. 
The crucial difference, leading to the possibility of exceptional times, occurs in the final step via Shepp's covering results.
	\smallskip
	
	\begin{proof}[Proof of Schmuland's Theorem \ref{schmu}]
		We start with the continuous analogue to Theorem \ref{2.2}.	 Suppose $W$ is a white-noise on $(0,\infty)\times (0,\infty)$, then one can show via the standard Yamada-Watanabe argument that there is a unique strong solution to		
		\begin{equation}\label{eqn:ssde}
		\begin{cases}
			X_t(v)=v+\sqrt 2\int_{]0,t]\times\R_+} \un{(u\leq X_s(v))} \, W(ds,du)+\theta vt,\\
			v\in [0,1],	t\geq 0.
		\end{cases}
		\end{equation}
		 In fact, since the immigration mechanism $g$ is constant, pathwise uniqueness holds. For every $v\in [0,1]$, $(X_t(v))_{t\geq 0}$ satisfies
		 \begin{align*}
		 	X_t(v)=v+\int_0^t \sqrt{2X_s(v)}\, dB_s+v\theta t
		 \end{align*}
		 for a Brownian motion $B$. Recalling \eqref{Feller}, we see that $\eqref{eqn:ssde}$ is a measure-valued process with branching mechanism $\psi(u)=u^2$ and constant-rate 
immigration.
		 
		 \smallskip

		The Pitman-Yor type representation corresponding to Theorem \ref{2.4} looks as follows: in the setting of Section \ref{Sec:RayKnight}, we consider
		a Poisson point process $(s_i, u_i,w^i)_i$ on $\R_+\times\R_+\times\cD$ with intensity measure 
		$(\delta_0(ds)\otimes F(du)+ds\otimes I(du))\otimes \bbQ_s(dw)$, where the excursion measure $\Q$ is defined via the law of the CSBP \eqref{Feller} with branching mechanism
$\psi(\lambda)=\lambda^2$.
Then the analog of Theorem \ref{2.4} is the following:
	 	\begin{equation}\label{RN}
		\begin{cases}
			Z_t(v) = \sum_{s_i=0} w^i_{t}\, \un{( u_i \le v )} + \sum_{0<s_i \le t} w^i_{t-s_i} \un{(u_i \le v \theta )},
			\\
			v\in [0,1],	t\geq 0,
		\end{cases}
		\end{equation}
		can be shown to solve \eqref{eqn:ssde};
		this result, for fixed $v$, goes back to Pitman and Yor \cite{PY}. The calculation \eqref{bb}, now using that	$u_t(\lambda) = \left( \lambda^{-1} +  t\right)^{-1}$ 
		is the unique non-negative solution to
		\begin{align*}
			u'_t(\lambda) = -(u_t(\lambda))^2,	\hspace{.3in} u_0(\lambda) = \lambda,
		\end{align*}
yields $\Q(\ell(w)\in dh)=\frac{1}{h^2}dh$.
		\smallskip
		
		For the analogue for Theorem \ref{2.7} we define now the process
		\begin{align*}
				R_t(v)=\frac{X_{S^{-1}(t)}(v)}{X_{S^{-1}(t)}(1)},
		\end{align*}
		with $S(t) = \int_0^t X_s(1)^{-1}\,ds$. It then follows again from the self-similarity that $R$ is well-defined and from It\=o's formula that $R$ is a standard Fleming-Viot process on $[0,1]$. The arguments here involve a continuous SDE which has been studied in \cite{DawsonLi2}:
		\begin{equation}\label{eqn:fv}
		\begin{cases}
			Y_t(v)=v+\int_0^t \int_0^1 \big[\un{(u\leq Y_s(v))}-Y_s(v)\big] W(ds,du)+\theta \int_0^t \big[v-Y_s(v)\big]\,ds,\\
			v\in [0,1],	t\geq 0,
		\end{cases}
		\end{equation}
		where $W$ is a white-noise on $(0,\infty)\times (0,1)$. It was shown in Theorem 4.9 of \cite{DawsonLi2} that the measure-valued process $\textbf Y$ associated with $(Y_t(v), t\geq 0, v\in[0,1])$ solves the martingale problem for the infinitely many sites model with mutations, i.e.  $\textbf Y$ has generator \eqref{GeneratorFV} with the choice \eqref{E:IAM mutation operator} for $A$.
		\smallskip
		
		Finally, in order to prove Schmuland's Theorem \ref{schmu} on exceptional times it suffices to prove the same result for (\ref{RN}). We proceed again via Shepp's covering arguments as 
we did in Section \ref{111}. The crucial difference is that the immigration is already constant $\theta$ so that \eqref{c}
becomes superfluous. The role of the Poisson point process $\Pi_{l,\eps}$ is played by $\Pi_{\theta,l}$ with 
intensity measure
		\begin{align*}
			\Pi_{\theta,l}'(dt,dh)=dt\otimes \frac{\theta}{h^2}\,dh.
		\end{align*}
		Plugging  into Shepp's criterion \eqref{Sh},
		by  Lemma~\ref{lemma1} and
		\begin{align}\label{inte}
			\int_0^1  \exp\Big(-\theta \log(t)\Big)dt=\int_0^1t^{-\theta}\,dt
		\end{align}
		we find that there are no exceptional times if $\theta\geq 1$.\\
		Conversely, let us assume $\theta<1$. Recalling that for $\theta=0$ the Fleming-Viot process has almost surely finitely many atoms for all $t>0$, we see that the first term in \eqref{RN} almost surely has finitely non-zero summands for all $t>0$. Hence, it suffices to show the existence of exceptional times for which the second term in \eqref{RN} vanishes. Arguing as before, this question is reduced to Shepp's covering result applied to $\Pi_{\theta,l}$:  \eqref{inte} combined with \eqref{Sh} leads to the result.
		
	\end{proof}

\section{Proof of Corollary \ref{cor}}
\label{sec:7}

The fact that  the $(\alpha,\theta)$-Fleming-Viot process  $(\mathbf Y_t,t\ge 0)$ converges in distribution to its unique invariant distribution and that this invariant distribution is not trivial (i.e. it charges measures with at least two atoms) seems to be one of those folklore results for which it is hard to point at a precise reference (however, the existence and unicity of the invariant measure of $(\mathbf Y_t,t\ge 0)$ is proved in \cite{li et al}). Here we sketch an argument that relies on the so-called {\it lookdown} construction of $(\mathbf Y_t,t\ge 0)$.

The lookdown construction was introduced by Donnelly and Kurtz in \cite{dk96} and later expanded in \cite{dk99} by the same authors. The case of Fleming-Viot processes with mutations (in the infinite site model) was treated by Birkner al. \cite{Birkner}. Let us very briefly describe how the lookdown construction works (for more details we refer to \cite{Birkner}).

The idea is to construct a sequence of processes $(\xi_i(t), t\ge 0), i=1,2,\ldots $ which take their values in the type-space $E$ (here $E=[0,1]$). We say that $\xi_i(t)$ is the {\it type} of the {\it level} $i$ at time $t$. The types evolve by two mechanisms : 
\begin{itemize}
\item[-] {\it lookdown events:} with rate $x^{-2}\Lambda(dx)$ a proportion $x$ of lineages are selected by i.i.d. Bernoulli trials. Call $i_1, i_2,\ldots$ the selected levels at a given event at time $t$. Then, $\forall k >1, \xi_{i_k(t)} =\xi_{i_1}(t-)$, that is the levels all adopt the type of the smallest participating level. The type $\xi_{i_k}(t-)$ which was occupying level $i_k$ before the event is pushed up to the next available level. 

\item[-] {\it mutation events:} On each level $i$ there is an independent Poisson point process $(t^{(i)}_j, j\ge 1)$ of rate $\theta$ of mutation events. At a mutation event $t^{(i)}_j$ the type $\xi_i(t^{(i)}_j-)$ is replaced by a new independent variable uniformly distributed on $[0,1]$ and the previous type is pushed up by one level (as well as all the types above him). 
\end{itemize}
The point is then that $$\Xi_t :=\lim_{n\to \infty} \frac1n \sum_{i=1}^n \delta_{\xi_i(t)}$$ exists simultaneously for all $t\ge 0$ almost surely and that $(\Xi_t , t\ge 0) =(\mathbf Y_t, t\ge 0)$ in distribution.

Fix $n \in \N,$  and define a process  $(\pi_t, t\ge 0)$ with values in the partitions of $\{1,2,\ldots\}$ by saying that $i\sim j$ for $\pi^{(n)}_t$ if and only $\xi_i(t)=\xi_j(t).$ It is well known that this is an {\it exchangeable} process. 
Recall from Corollary \ref{C: pure atom} that for each $t\ge 0$ fixed, $\Xi_t$ is almost surely purely atomic. Alternatively this can be seen from the lookdown construction since at a fixed time $t>0$,  the level one has been looked down upon by infinitely many level above since the last mutation event on level one. We can thus write
$$
\Xi_t = \sum a_i(t) \delta_{x_i(t)},
$$
where the $a_i$ are enumerated in decreasing order. It is also known that the sequences $(a_i(t), i\ge 1)$ of atom masses and $(x_i(t), i\ge 1)$ of atom locations are independent.
The $a_i(t)$ are the asymptotic frequencies of the blocks of $\pi(t)$ which are thus in one-to-one correspondence with the atoms of $\Xi_t$. Furthermore the sequence $(x_i(t), i\ge 1)$ converges in distribution to a sequence of i.i.d random variables with common distribution $I$ because all the types that were present initially have been replaced by immigrated types after some time. To see this note that after the first mutation on level 1, the type $\xi_1(0)$ is pushed up to infinity in a finite time which is stochastically dominated by the fixation time of the type at level 1 in a Beta Fleming-Viot without mutation. This also proves the second point of the corollary.

For each $n\ge 1, $ let us consider $\pi^{(n)}(t) = \pi_{|[n]}(t)$ the restriction to $\{1,\ldots,n\}$ of $\pi(t)$. Then, for all $n\ge 1$, the process $(\pi^{(n)}_t, t\ge 0)$ is an irreducible Markov process on a finite state-space and thus converges to its unique invariant distribution. This now implies that $(\pi(t), t\ge 0)$ must also converges to its invariant distribution. By Kingman continuity Theorem (see \cite[Theorem 36]{pitman} or \cite[Theorem1.2]{nath book}) this implies that the ordered sequence of the atom masses $(a_i(t))$ converges in distribution as $t\to \infty$. Because $(x_i(t), i\ge 1)$ also converges in distribution this implies that $\Xi_t$ itself converges in distribution to its invariant measure.

Furthermore it is also clear that the invariant distribution of $(\pi^{(n)}_t, t\ge 0)$ must charge configurations with at least two non-singleton blocks. Since $\pi$ is an exchangeable process, so is its invariant distribution. Exchangeable partitions have only two types of blocks: singletons and blocks with positive asymptotic frequency so this proves that the invariant distribution of $\pi$ charges partition with at least two blocks of positive asymptotic frequency.

\medskip
\noindent \textbf{Aknowledgements:} 
The authors are very grateful for stimulating discussions with Zenghu Li and Marc Yor.
LD was supported by the Fondation Science Math\'ematiques de Paris, LM is partly supported by the Israel Science Foundation. JB, LM and LZ thank the Isaac Newton Institute for Mathematical Sciences, Cambridge, for a very pleasant stay during which part of this work was produced. LM thanks the Laboratoire de Probabilit\'es et Mod\`eles Al\'eatoires for the opportunity to visit it and carry out part of this research there.

\end{document}